\newcommand{\Q}{\mathbf{Q}}
\newcommand{\C}{\mathbf{C}}
\newcommand{\Z}{\mathbf{Z}}
\newcommand{\Fp}{\mathbf{F}_p}
\newcommand{\Fq}{\mathbf{F}_q}
\newcommand{\af}{\mathfrak{a}}
\newcommand{\lf}{\mathfrak{l}}
\newcommand{\pf}{\mathfrak{p}}
\newcommand{\ff}{\mathfrak{f}}
\newcommand{\inkron}[2]{\genfrac {(}{)}{0.9pt}{}{#1}{#2}}
\newcommand{\M}{\textsf{M}}
\newcommand{\PD}{\mathcal{P}_D}
\newcommand{\cD}{\mathcal{D}}
\newcommand{\idx}[2]{[#1\hspace{1.5pt}\text{\rm :}\hspace{2pt}#2]}
\newcommand{\bmax}{b_{\max}}
\newcommand{\ihat}{\hat{\imath}}
\newcommand{\Tf}{T_{\rm find}}
\newcommand{\Te}{T_{\rm enum}}
\newcommand{\Tb}{T_{\rm build}}
\newcommand{\Tc}{T_{\rm crt}}
\newcommand{\Tr}{T_{\rm root}}
\newcommand{\Tp}{T_{\rm poly}}
\newcommand{\Tt}{T_{\rm tot}}
\renewcommand{\vec}[1]{\boldsymbol{#1}}
\renewcommand{\O}{\mathcal{O}}
\def\cl{\operatorname{cl}}
\def\Gal{\operatorname{Gal}}
\def\End{\operatorname{End}}
\def\lht{\operatorname{ht}}
\def\EllO{\text{\rm Ell}_\mathcal{O}}
\newtheorem{lemma}{Lemma}
\newtheorem{proposition}{Proposition}
\newtheorem*{proposition*}{Proposition}
\newtheorem{claim}{Heuristic Claim}
\newcommand{\algstart}[2]{\smallskip\noindent{\bf Algorithm} #1. \emph{#2}\begin{enumerate}}
\newcommand{\algend}{\end{enumerate}\vspace{4pt}}
\newcommand{\algitem}{\vspace{2pt}\item}
\begin{document}
\title{Accelerating the CM method}

\author{Andrew V. Sutherland}
\address{Massachusetts Institute of Technology, Cambridge, Massachusetts 02139}
\email{drew@math.mit.edu}

\subjclass[2010]{Primary 11Y16 ; Secondary  11G15, 11G20, 14H52}

\begin{abstract}
Given a prime $q$ and a negative discriminant $D$, the CM method constructs an elliptic curve $E/\Fq$ by obtaining a root of the Hilbert class polynomial $H_D(X)$ modulo $q$.
We consider an approach based on a decomposition of the ring class field defined by $H_D$, which we adapt to a CRT setting.
This yields two algorithms, each of which obtains a root of $H_D\bmod q$ without necessarily computing any of its coefficients.
Heuristically, our approach uses asymptotically less time and space than the standard CM method for almost all $D$.
Under the GRH, and reasonable assumptions about the size of $\log q$ relative to $|D|$, we achieve a space complexity of $O((m+n)\log q)$ bits, where $mn=h(D)$, which may be as small as $O(|D|^{1/4}\log q)$.
The practical efficiency of the algorithms is demonstrated using $|D| > 10^{16}$ and $q\approx 2^{256}$,
and also $|D| > 10^{15}$ and $q\approx 2^{33220}$.  These examples are both an order of magnitude larger than the best previous results obtained with the CM method.
\end{abstract}

\maketitle

\section{Introduction}
The \emph{CM method} is a widely used technique for constructing elliptic curves over finite fields.
To illustrate, let us construct an elliptic curve $E/\Fq$ with exactly $N$ points.
We shall assume that $q$ is prime, and require $t=q+1-N$ to be nonzero and satisfy $|t|< 2\sqrt{q}$.
We may write $4q=t^2-v^2D$, for some nonzero integer~$v$ and negative discriminant~$D$, and then proceed as follows:

\renewcommand\labelenumi{\theenumi.}
\begin{enumerate}
\item
Compute the Hilbert class polynomial $H_D\in\Z[X]$.
\item
Find a root $x$ of $H_D(X)$ modulo $q$.
\end{enumerate}
The root $x$ is the $j$-invariant of an elliptic curve $E$ with $\#E(\Fq)=N$;
an explicit equation for~$E$ can be obtained via \cite{Rubin:CMTwist}.
The endomorphism ring $\End(E)$ is isomorphic to the imaginary quadratic order $\O$ with discriminant~$D$, and we say that~$E$ has \emph{complex multiplication} (CM) by $\O$.

In principle, the CM method can construct any ordinary elliptic curve $E/\Fq$.
In practice, it is feasible only when $|D|$ is fairly small.  The main difficulty lies in step~1.
The Hilbert class polynomial is notoriously large, as may be seen below:
\vspace{3pt}

\begin{center}
\begin{tabular}{lrrclrr}
$|D|$ & $h(D)$ & size& \hspace{12pt} & $|D|$ & $h(D)$ & size\\\midrule
$10^5+4$    &      152 &  152 KB && $10^{11}+4$ &   145981 &  323 GB \\
$10^6+104$  &      472 & 1.67 MB && $10^{12}+135$&   465872 & 3.53 TB\\
$10^7+47$   &     1512 & 22.3 MB && $10^{13}+15$ &  1463328 & 38.5 TB\\
$10^8+20$   &     5056 &  239 MB && $10^{14}+4$&  4658184 &  384 TB\\
$10^9+15$   &    15216 & 2.73 GB && $10^{15}+15$& 14635920 & 4.45 PB\\
$10^{10}+47$&    48720 & 31.4 GB && $10^{16}+135$& 46275182 & 47.2 PB
\end{tabular}
\end{center}
\vspace{3pt}

The value $h(D)$ is the class number of $D$, which is the degree of $H_D$.
The size listed is an upper bound on the total size of $H_D$ derived from known bounds on its largest coefficient \cite[Lemma~8]{Sutherland:HilbertClassPolynomials}, and is generally accurate to within ten percent.
These discriminants were chosen so that the ratio $h(D)/\sqrt{|D|}$ is within ten percent of its asymptotic average ($0.461559\ldots$), so they represent typical examples.

There are at least three different ways to compute $H_D$: the complex analytic method~\cite{Atkin:ECPP,Enge:FloatingPoint,Gee:GeneratingClassFields}, a $p$-adic approach \cite{Broker:pAdicClassPolynomial,Couveignes:ClassPolynomial}, or by computing $H_D$ modulo many small primes and applying the Chinese remainder theorem (CRT) \cite{Agashe:CRTClassPolynomial,Belding:HilbertClassPolynomial,Chao:CRTCMmethod}.
Under suitable heuristic assumptions all three approaches can achieve quasi-linear running times: $O(|D|\log^c|D|)$ for some constant $c$.
However, the $O(|D|\log^{1+\epsilon}|D|)$ space needed to compute $H_D$ makes it difficult to apply these algorithms when $|D|$ is large.  As noted in \cite{Enge:FloatingPoint}, space is typically the limiting factor.

Here we build on the CRT approach of \cite{Sutherland:HilbertClassPolynomials}, which gives a probabilistic (Las Vegas) algorithm to compute $H_D$, with an expected running time of $O(|D|\log^{5+\epsilon}|D|)$ under the generalized Riemann hypothesis (GRH), and a heuristic running time of $O(|D|\log^{3+\epsilon}|D|)$.   Most critically for the CM method, it directly computes $H_D\bmod q$ without computing $H_D$ over~$\Z$.
This yields a space complexity of $O\bigl(|D|^{1/2+\epsilon}\log q\bigr)$, allowing it to handle much larger values of $|D|$.

As a practical optimization, the CM method may use alternative class polynomials that are smaller than $H_D$ by a large constant factor.
The algorithm in \cite{Sutherland:HilbertClassPolynomials} has recently been adapted to compute such class polynomials \cite{EngeSutherland:CRTClassInvariants}.
To simplify our presentation we focus on the Hilbert class polynomial~$H_D$, but our results apply to all the class polynomials considered in \cite{EngeSutherland:CRTClassInvariants}, a feature we exploit in \S\ref{section:performance}.

When $h(D)$ is composite, a root of $H_D$ can be obtained via a decomposition of the field extension defined by $H_D(X)$, as described in \cite{EngeMorain:Decomposition,HanrotMorain:Solvability}.
The algorithm in~\cite{EngeMorain:Decomposition} computes integer polynomials that describe this decomposition, which can be used in place of $H_D$.
This allows a single root-finding operation to be replaced by two root-finding operations of smaller degree, speeding up step 2 of the CM method. 
We adapt this technique to a CRT setting, where we find it also accelerates step 1, which is the asymptotically dominant step as a function of $|D|$.\footnote{We assume throughout that  fast probabilistic methods are used to find roots of polynomials over finite fields.}

Provided $h(D)$ is sufficiently composite, we may choose a decomposition that significantly reduces the size of the coefficients in the defining polynomials.
In order to do so, we derive an explicit height bound that can be efficiently computed for each of the possible decompositions available.
By choosing the optimal decomposition we gain nearly a $\log|D|$ factor in the running time, on average, based on the heuristic analysis in \S\ref{subsection:heuristics}.
This claim is supported by empirical data, and we give practical examples that achieve more than a tenfold speedup.

We are also able to improve the space complexity of the CM method.
\begin{proposition*}
Assume the GRH, and fix real constants $\delta\ge 0$ and $\epsilon > 0$.
Let~$\O$ be an imaginary quadratic order with discriminant~$D$ and class number $h=mn$, with $m\le O(|D|^{1/2-\delta}$).  Let $q$ be a prime of the form $4q=t^2-v^2D$, and assume $\log q = O(|D|^\delta\log|D|)$.
An elliptic curve $E/\Fq$ with $\End(E)\cong\O$ can be constructed in $O(|D|\log^{6+\epsilon}|D|)$ expected time using $O\bigl((m+n)\log q\bigr)$ space.
\end{proposition*}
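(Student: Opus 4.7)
The plan is to replace the single root-finding on $H_D$ by two sequential root-findings, one on a polynomial $P(X)\in\Z[X]$ of degree~$m$ defining a subfield of the ring class field $L_\O$, and one on a polynomial $Q(x_0,Y)\in\Fq[Y]$ of degree~$n$ defining the remaining extension. Both polynomials (mod $q$) will be computed by the CRT method of \cite{Sutherland:HilbertClassPolynomials} in a streaming fashion that respects the target space bound.

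Concretely, I would fix a subgroup $H\le\cl(\O)$ of order $n$, so the ring class field decomposes as $\Q\subset K\subset L_H\subset L_\O$ with $L_H$ the fixed field of $H$ and $[L_H:K]=m$. Choose a class invariant $\alpha\in L_H$ whose minimal polynomial $P(X)\in\Z[X]$ has degree~$m$, and a class invariant $\beta\in L_\O$ whose minimal polynomial over $L_H$ has degree~$n$, giving $Q(X,Y)\in\Z[X,Y]$ with $Q(\alpha,Y)$ defining $L_\O/L_H$. First compute $P\bmod q$ by the CRT method, enumerating the $\cl(\O)$-action on CM $j$-invariants modulo each small prime $p$ via the isogeny volcano, grouped by cosets of~$H$ so that $P\bmod p$ can be read off directly; the accumulating CRT sum for $P\bmod q$ uses $O(m\log q)$ bits. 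Find a root $x_0\in\Fq$ of $P\bmod q$ by Cantor--Zassenhaus.

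Having fixed $x_0$, compute $Q(x_0,Y)\bmod q$ by a second CRT pass over the small primes. For each small prime $p$ in this pass, the enumeration produces $Q(X,Y)\bmod p$ and one updates the $n$ accumulating CRT sums for the coefficients of $Q(x_0,Y)\bmod q$, discarding $Q(X,Y)\bmod p$ between primes; only the CRT sums, of total size $O(n\log q)$, persist. Root-finding on $Q(x_0,Y)\bmod q$ then yields a value $y_0$, from which a $j$-invariant and an equation for $E/\Fq$ with $\End(E)\cong\O$ are produced by the standard conversion formulas and \cite{Rubin:CMTwist}.

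The main obstacle is the explicit height control on $P$ and $Q$ together with the per-prime space accounting. One needs $\log\|P\|_\infty$ and the log-height of each coefficient of $Q(X,Y)$ to be $O(|D|^{1/2+\epsilon})$, so that the CRT prime set has the same asymptotic size as in \cite{Sutherland:HilbertClassPolynomials} and the total bit-size of $P$ and $Q$ remains $O(|D|^{1+\epsilon})$; this gives the time bound $O(|D|\log^{6+\epsilon}|D|)$, with the extra logarithmic factor over \cite{Sutherland:HilbertClassPolynomials} paying for the reorganization of the isogeny-walk by $H$-orbits and the second CRT pass. The hypotheses $m\le O(|D|^{1/2-\delta})$ and $\log q=O(|D|^\delta\log|D|)$ are then used to verify that the per-prime working memory for the volcano enumeration can be scheduled in $O((m+n)\log q)$ bits by processing $H$-orbits in a fixed order and flushing to the CRT accumulators before advancing, ensuring that at no point is a full copy of $H_D\bmod p$, of $Q(X,Y)\bmod p$, or of $H_D\bmod q$ held in memory simultaneously.
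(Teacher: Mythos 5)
Your overall architecture --- decompose $K_\O/K$ through the fixed field of a subgroup of order $n$, compute a degree-$m$ polynomial mod $q$ in one CRT pass, find a root, then run a second CRT pass that accumulates only the $n$ coefficients of the degree-$n$ relative polynomial specialized at that root --- is exactly the paper's Algorithm~2. But there is a quantitative gap at the heart of the second pass. The root $x_0\in\Fq$ does not reduce modulo the small primes $p$; you must lift it to an integer $\varphi(x_0)\in[0,q-1]$ and reconstruct by CRT the \emph{integers} that are the coefficients of $Q(\varphi(x_0),Y)\in\Z[Y]$. Since $Q$ has degree up to $m-1$ in $X$, these integers can be as large as $m\,q^{\,m-1}\|Q\|_\infty$, so the second pass needs small primes whose product exceeds roughly $4mq^{m-1}B$ rather than $4B$; your claim that the prime set has ``the same asymptotic size as in \cite{Sutherland:HilbertClassPolynomials}'' because the coefficients of $Q$ itself have log-height $O(|D|^{1/2+\epsilon})$ is false as stated, and with your prime set the reconstruction would simply return wrong values. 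This is precisely where the hypotheses $m\le O(|D|^{1/2-\delta})$ and $\log q=O(|D|^{\delta}\log|D|)$ enter: they force $m\log q=O(|D|^{1/2}\log|D|)$, so the inflated bound is still $O(|D|^{1/2}\log^{1+\epsilon}|D|)$ bits and $|S|$ keeps the same order. You instead attribute these hypotheses to per-prime memory scheduling, which they do not control.

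Two further steps need a mechanism rather than an assertion. First, a bivariate $Q(X,Y)\in\Z[X,Y]$ with $Q(\alpha,Y)$ the minimal polynomial of $\beta$ over $L_H=K(\alpha)$ need not exist with integer coefficients: the relative coefficients are algebraic integers of $L_H$, but expressing them as polynomials in $\alpha$ generally introduces denominators. The paper's Hecke representation $W_k(Y)=\sum_i\theta_{ik}V(Y)/(Y-y_i)$ is manifestly symmetric, hence in $\Z[Y]$, at the price of the factor $1/V'(Y)$ in \eqref{eq:U0} that is divided out only after specializing $Y$; this form is also what makes your ``flush orbit by orbit'' step possible, since $W_k(z)=\sum_i\theta_{ik}z_i$ decomposes as a sum over $G$-orbits, each term computable from one orbit in $O(n\log p)$ space, whereas evaluating an arbitrary bivariate $Q$ at $X=\varphi(x_0)$ has no such orbit-local decomposition. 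Second, processing the $G$-orbits consecutively (so that the full enumeration of $\EllO(\Fp)$, of size $O(h\log|D|)\gg(m+n)\log q$ in general, is never held) forces a polycyclic presentation whose initial generators generate $G$; such presentations can involve norms as large as $\Omega(|D|^{1/3})$, and the paper copes by expressing large-norm classes as products of $O(\log|D|)$ small-norm ideals under the GRH. That device, not the ``reorganization of the isogeny walk,'' is the source of the extra logarithmic factor in the $O(|D|\log^{6+\epsilon}|D|)$ time bound.
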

This is achieved by interleaving the computation of the defining polynomials modulo many small primes~$p$ with root-finding operations modulo~$q$.
If we additionally require $m = \Omega(|D|^{1/2-\gamma})$, for some $\gamma>\delta$, this yields an $O(|D|^{1/4+\gamma}\log q)$ space bound, improving the $O(|D|^{1/2+\epsilon}\log q)$ result in \cite{Sutherland:HilbertClassPolynomials}.

The organization of the paper is as follows.  We begin with some necessary preparation in \S \ref{section:background}, and
then present two algorithms to obtain a root of the Hilbert (or other) class polynomial modulo a prime $q$ in \S \ref{section:algorithm1} and \S \ref{section:algorithm2}.
The optimization of the height bound is addressed in~\S \ref{section:bound}.
Finally, we present performance data in~\S \ref{section:performance}, including the construction of an elliptic curve over a 256-bit prime field with $|D| > 10^{16}$, and an elliptic curve over a 10000-digit prime field with $|D| > 10^{15}$, both of which are new records for the CM method.

\section{Background}\label{section:background}

\subsection{Hilbert class polynomials}\label{subsection:CMtheory}
We first recall some facts from the theory of complex multiplication, referring to \cite{Cox:ComplexMultiplication, Lang:EllipticFunctions, Serre:ComplexMultiplication} for proofs and further background.
Let $\O$ be an imaginary quadratic order, identified by its discriminant~$D$.  
The $j$-invariant of the lattice $\O$ is an algebraic integer whose minimal polynomial is the \emph{Hilbert class polynomial} $H_D$.
If $\af$ is an invertible $\O$-ideal (including $\af=\O$), then the torus $\C/\af$ corresponds to an elliptic curve $E/\C$ with CM by~$\O$, and every such curve arises in this fashion.
Equivalent ideals yield isomorphic elliptic curves, and this gives a bijection between the ideal class group $\cl(\O)$ and the set
\[
\EllO(\C) = \{j(E/\C)\colon \End(E)\cong\O\},
\]
the $j$-invariants of the elliptic curves defined over $\C$ with CM by $\O$.
We then have
\begin{equation}\label{eq:class1}
H_D(X) = \prod_{j_i\in\EllO(\C)}(X-j_i).
\end{equation}
The splitting field of $H_D$ over $K=\Q(\sqrt{D})$ is the \emph{ring class field} $K_\O$.
It is an abelian extension whose Galois group is isomorphic to $\cl(\O)$, via the Artin map.

This isomorphism can be made explicit via isogenies.
Let $E/\C$ be an elliptic curve with CM by $\O$ and let $\af$ be an invertible $\O$-ideal.
After fixing an isomorphism $\End(E)\cong\O$, there is a uniquely determined separable isogeny whose kernel is the group of points annihilated by every endomorphism in $\af\subset\O\cong\End(E)$.
The image of this isogeny also has CM by~$\O$, and this defines an action of the ideal group of~$\O$ on the set $\EllO(\C)$.
Principal ideals act trivially, and the induced action of the class group is regular.
Thus the set $\EllO(\C)$ is a principal homogeneous space, a \emph{torsor}, for the group $\cl(\O)$.
For a $j$-invariant $j_i$ in $\EllO(\C)$ and an ideal class $[\af]$ in $\cl(\O)$, we write $[\af]j_i$ to denote the image of $j_i$ under the action of $[\af]$.

If $p$ is a prime that splits completely in $K_\O$, equivalently (for $p>3$), a prime that satisfies
the \emph{norm equation}
\begin{equation}\label{eq:norm}
4p = t^2-v^2D,
\end{equation}
for some nonzero integers $t$ and $v$, then $H_D$ splits completely in $\Fp[X]$ and its roots form the set $\EllO(\Fp)$.
Conversely, every ordinary (not supersingular) elliptic curve $E/\Fp$ has CM by some imaginary quadratic order $\O$ in which the Frobenius endomorphism corresponds to an element of norm $p$ and trace $t$.

\subsection{Computing $H_D$ with the CRT method}
The above theory suggests the following algorithm to compute $H_D$ modulo a prime $p$ that splits completely in $K_\O$:
\begin{enumerate}
\item
Find an elliptic curve $E$ with $j$-invariant $j_1\in\EllO(\Fp)$.
\item
Enumerate $\EllO(\Fp)$ as $\{[\af]j_1\colon [\af]\in\cl(\O)\} = \{j_1,\ldots, j_h\}$.
\item
Compute $H_D(X)\bmod p$ as $(X-j_1)(X-j_2)\cdots(X-j_h)$.
\end{enumerate}
Step~1 essentially involves testing curves at random, so this algorithm is feasible only when $p$ is fairly small, but see \cite[\S 3]{Sutherland:HilbertClassPolynomials} for various ways in which this step may be optimized.
Step~2 is addressed in \S \ref{subsection:torsor}.
Step~3 is just polynomial arithmetic, but this is usually the most expensive step \cite[\S 7.1]{Sutherland:HilbertClassPolynomials}.

Under the GRH we can we always work with primes $p$ that are roughly the same size as~$|D|$, no matter how big $q$ is.
We select a sufficiently large set of such small primes that split completely in $K_\O$, and compute $H_D\bmod p$ for each of them.
Provided the product of these primes is larger than $2B$, where~$B$ bounds the coefficients of $H_D$, the polynomial $H_D$ is uniquely determined by the Chinese Remainder Theorem (CRT).
Explicit values for $B$ are given in \cite{Enge:ClassInvariants} and \cite[Lemma~8]{Sutherland:HilbertClassPolynomials}.

However, to compute $H_D\bmod q$ in $O(|D|^{1/2+\epsilon}\log q)$ space, one cannot simply compute $H_D$ over $\Z$ and then reduce it modulo $q$, since writing down $H_D$ would already take too much space.
Instead, one may use the explicit CRT (mod~$q$) \cite{Bernstein:Thesis,Bernstein:ModularExponentiation}, applying it in an online fashion to accumulate results modulo $q$ that are updated after each computation of $H_D\bmod p$, as described in \cite[\S 6]{Sutherland:HilbertClassPolynomials} and summarized in~\S\ref{subsection:crt} below.
Here we also use the explicit CRT, but to obtain a root of $H_D\bmod q$ with a space complexity that may be as small as $O(|D|^{1/4+\epsilon}\log q)$, we must even avoid writing down $H_D \bmod q$.  This requires some new techniques that are described in~\S \ref{subsection:term2}.

\subsection{Explicit CRT}\label{subsection:crt}
Let $p_1,\ldots,p_n$ be primes with product $M$, let $M_i=M/p_i$, and let $a_iM_i\equiv 1 \bmod p_i$.
If $c\in\Z$ satisfies $c\equiv c_i\bmod p_i$, then $c\equiv\sum_i c_ia_iM_i\bmod M$.
If $M > 2|c|$, this congruence uniquely determines~$c$.
This is the usual CRT method.

Now suppose $M > 4|c|$ and let $q$ be a prime (or any integer, in fact).
Then we may apply the \emph{explicit CRT mod $q$} \cite[Thm.\ 3.1]{Bernstein:ModularExponentiation} to compute
\begin{equation}\label{eq:ecrt}
c\equiv \Bigl(\sum_i c_ia_iM_i - rM\Bigr)\bmod q,
\end{equation}
where $r$ is the closest integer to $\sum_i c_ia_i/p_i$; when computing $r$, it suffices to approximate each $c_ia_i/p_i$ to within $1/(4n)$, by \cite[Thm.\ 2.2]{Bernstein:ModularExponentiation}.

When applying the explicit CRT to compute many values $c\bmod q$, say, the coefficients of a polynomial, one first computes the $a_i$ (mod $p_i$) and $M_i$ (mod $q$) using a product tree as described in \cite[\S 6.1]{Sutherland:HilbertClassPolynomials}; this is CRT \emph{precomputation} step, and it does not depend on the coefficients $c$.
Then, as the reduced coefficient values $c_i = c\bmod p_i$ are computed for a particular $p_i$, the sum $\sum c_ia_iM_i\bmod q$ and an approximation to $\sum c_ia_i/p_i$ are updated for each coefficient, after which the $c_i$ may be discarded; this is the CRT \emph{update} step.
Finally, when these sums have been updated for every prime $p_i$, one applies \eqref{eq:ecrt} to obtain $c\bmod q$ for each coefficient, using the sums $\sum c_ia_iM_i$ and the approximations $r\approx\sum c_ia_i/p_i$; this is the CRT \emph{postcomputation} step.
For further details, including explicit algorithms for each step, see \cite[\S 6.2]{Sutherland:HilbertClassPolynomials}

\subsection{Assuming the GRH}\label{subsection:GRH}
The Chebotarev density theorem guarantees that a set of primes $S$ sufficient to compute $H_D$ with the CRT method exists.  We even have effective bounds on their size \cite{Lagarias:Chebotarev}, but these are too large for our purpose.
To obtain better bounds, we assume the GRH (for the Dedekind zeta function of~$K_\O$), which implies that the primes in $S$ are no more than a polylogarithmic factor larger than $|D|$, see \cite[Lemma~3]{Sutherland:HilbertClassPolynomials}.
Having made this assumption, we are then in a position to apply other bounds that depend on some instance of the extended or generalized Riemann hypothesis, all of which we take to be included when we assume the GRH without qualification.
In particular, we make frequent use of the bound $h(D) = O(|D|^{1/2}\log \log |D|)$, proven in~\cite{Littlewood:ClassNumber}, as well as the bound
\begin{equation}\label{eq:sumbound}
\sum_{[\af]\in\cl(\O)}\frac{1}{N(\af)} = O(\log|D|\log\log|D|),
\end{equation}
where $\af$ is the invertible ideal of least norm in $[\af]$, from \cite[Lemma~2]{Belding:HilbertClassPolynomial}.
The sum in (\ref{eq:sumbound}) may also be written as $\sum1/A_i$, where $(A_i,B_i,C_i)$ ranges over the primitive reduced binary quadratic forms $A_ix^2+B_ixy+C_iy^2$ of discriminant $D=B_i^2-4A_iC_i$.

\subsection{Decomposing the class equation}\label{subsection:decomp}
We now describe an explicit method for ``decomposing" a polynomial via a decomposition of the field extension it defines.
This is based on material in \cite{EngeMorain:Decomposition} and \cite{HanrotMorain:Solvability} that we adapt to our purpose here.

Let $K$ be a number field, and let $P\in \Z[X]$ be a monic polynomial, irreducible over $K$, with splitting field $M$.
We have in mind $K=\Q(\sqrt{D})$, $P=H_D$, and $M=K_\O$.
We shall assume the action of $\Gal(M/K)$ is regular, equivalently, that $\idx{M}{K}=\deg P$, which holds in the case of interest.

Given a normal tower of fields $K\subset L\subset M$, we may decompose the extension $M/K$ into extensions
$M/L$ and $L/K$ via polynomials $U\in\Z(Y)[X]$ and $V\in\Z[X]$, where $P(x)=0$ if and only if $U(x,y)=0$ and $V(y)=0$ for some $y\in L$.
The polynomial $V(Y)$ defines the extension $L/K$, and for any root $y$ of $V$, the polynomial $U(X,y)$ defines the extension $M/L$.
In our application $L$ will be identified as the fixed field of a given normal subgroup $G$ of $\Gal(M/K)$.

Let us fix a root $x$ of $P$, and let $\gamma x$ denote the conjugate of $x$ under the action of $\gamma\in\Gal(M/K)$.
Let $\beta_1,\ldots,\beta_n$ be the elements of $G$, and let $\alpha_1G,\ldots,\alpha_mG$ be the cosets of $G$ in $\Gal(M/K)$.
We may factor $P$ in $L[X]$ as $P=P_1\cdots P_m$, where
\begin{equation}\label{eq:Pi}
P_i(X) = \prod_{k=1}^n\bigl(X-\alpha_i\beta_kx\bigr) = \sum_{k=0}^n\theta_{ik}X^k.
\end{equation}

Now let us pick a symmetric function $s$ in $\Z[T_1,\ldots,T_n]$ for which each of the $m$ values $y_i=s(\alpha_i\beta_1x,\ldots,\alpha_i\beta_nx)$ are distinct, equivalently, for which $L=K(y_i)$; Lemma \ref{lemma:symmetric} below shows that this is easily achieved.  We then define the polynomial
\begin{equation}\label{eq:V}
V(Y)=\prod_{i=1}^m\bigl(Y-y_i\bigr).
\end{equation}
The coefficients of $V$ lie in $\Z$, since each may be expressed as a symmetric integer polynomial in the roots of the monic polynomial $P\in\Z[X]$.
For $0\le k\le n$ let
\begin{equation}\label{eq:Wk}
W_k(Y)=\sum_{i=1}^m \theta_{ik}\frac{V(Y)}{(Y-y_i)}.
\end{equation}
As with $V$, we have $W_k\in\Z[Y]$.  Note that $W_k(Y)$ is the unique polynomial of degree less than $m$
for which $W_k(y_i)=\theta_{ik}V'(y_i)$, by the Lagrange interpolation formula.  This definition of the $W_k$ is referred to as the \emph{Hecke representation} in \cite{EngeMorain:Decomposition}.

Finally, let
\begin{equation}\label{eq:U0}
U(X,Y)=\frac{1}{V'(Y)}\sum_{k=0}^nW_k(Y)X^k.
\end{equation}
For each root $y_i$ of $V(Y)$ we then have $U(X,y_i)=P_i(X)$, with $V'(y_i)\ne 0$, since~$V$ has distinct roots.
Each root of $U(X,y_i)$ is a root of $P(X)$, and every root of $P(X)$ may be obtained in this way.
Notice that this construction does not require us to know the coefficients of~$P$, but we must be able enumerate the $G$-orbits of its roots.

As noted in \cite{HanrotMorain:Solvability}, for any given $K$, $P$, and $G$, there are only finitely many linear combinations of the elementary symmetric functions, up to scalar factor, that do not yield a symmetric function $s$ suitable for the construction above.
More precisely, we have the following lemma.

\begin{lemma}\label{lemma:symmetric}
Let $M=K(x)$ be a finite Galois extension of a number field $K$, let $G$ be a normal subgroup of $\Gal(M/K)$
with order~$n$ and fixed field~$L$, and let $m=[L:K]$.
Let $\vec{x}=(x_1,\ldots,x_n)$ denote the $G$-orbit of $x$, and let $e_1, \ldots, e_n$ denote the elementary symmetric functions on $n$ variables.
There are at most $m^{n-1}(m-1)^{n-1}$  linear combinations of the form $s= e_1 + c_2e_2 + \cdots +c_ne_n$, with $c_2,\ldots,c_n\in K$,
for which $L \ne K(s(\vec{x}))$.
\end{lemma}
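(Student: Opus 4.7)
The plan is to turn the condition $L \ne K(s(\vec x))$ into the existence of a collision among the $m$ Galois conjugates of $s(\vec x)$, view each such collision as an affine linear constraint on $(c_2,\ldots,c_n)$, and then bound the resulting bad locus. Since $s = e_1 + c_2 e_2 + \cdots + c_n e_n$ is symmetric in the coordinates of the $G$-orbit $\vec x$, the value $s(\vec x)$ is automatically fixed by $G$ and hence lies in $L$. As $L/K$ is Galois of degree $m$, the equality $K(s(\vec x)) = L$ holds if and only if the $m$ conjugates $y_i := \alpha_i(s(\vec x)) = s(\alpha_i \vec x)$ are pairwise distinct.

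For each ordered pair $i \ne j$ I would expand
\[
y_i - y_j \;=\; \sum_{k=1}^{n} c_k \bigl(e_k(\alpha_i \vec x) - e_k(\alpha_j \vec x)\bigr),\qquad c_1 = 1,
\]
as an affine linear form in $(c_2,\ldots,c_n)$, and show this form is nonzero in $K[c_2,\ldots,c_n]$. If every coefficient $e_k(\alpha_i\vec x) - e_k(\alpha_j\vec x)$ vanished, then by \eqref{eq:Pi} the polynomials $P_i$ and $P_j$ would coincide, so the multiset of roots of $P_i$---the $G$-orbit of $\alpha_i x$---would equal that of $P_j$. But $\Gal(M/K)$ acts freely on $x$ (since $M = K(x)$), and $G$ is normal, so two Galois translates $\sigma x, \tau x$ share a $G$-orbit iff $\sigma G = \tau G$; this contradicts $\alpha_i G \ne \alpha_j G$.

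With nonzeroness in hand, the bad locus is contained in the vanishing set of the nonzero polynomial $\Delta := \prod_{i \ne j}(y_i - y_j)$, each of whose $m(m-1)$ factors is linear in each $c_k$, so $\Delta$ has degree at most $m(m-1)$ in each variable. To reach the count $m^{n-1}(m-1)^{n-1} = (m(m-1))^{n-1}$, I would argue by iterated projection: once $c_2, \ldots, c_{n-1}$ are specialized so that $\Delta$ remains nontrivial in $c_n$, its univariate restriction has at most $m(m-1)$ roots, and an induction on $n$ multiplies these per-coordinate budgets.

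The main obstacle is controlling \emph{degenerate} specializations, where the restriction of $\Delta$ to the next variable collapses to the zero polynomial, breaking the naive univariate count. I would handle this by exploiting the product structure: each individual factor $y_i - y_j$ is already a nonzero linear form in the formal variables, so any such degeneracy is cut out by a proper subvariety that can be absorbed into the inductive count factor-by-factor, yielding the multiplicative bound $(m(m-1))^{n-1}$.
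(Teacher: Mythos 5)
Your proof takes a genuinely different route from the paper's, though both ultimately rest on a primitive-element style collision bound. You translate $L \neq K(s(\vec{x}))$ directly into a collision $y_i = y_j$ among the $m$ conjugates of $s(\vec{x})$, expand each $y_i - y_j$ as an affine form in $(c_2,\ldots,c_n)$, prove that this form is not identically zero via the free action of $\Gal(M/K)$ on $x$ (a clean and correct ingredient), and then try to count the zero locus of $\Delta = \prod_{i\neq j}(y_i - y_j)$ by iterated projection. The paper instead works along the filtration $L_k = K(z_1,\ldots,z_k)$ with $z_k = e_k(\vec{x})$: it first shows $L_n = L$ (using $[L(x):L]=n$ together with the fact that $x$ satisfies a monic degree-$n$ polynomial over $L_n$), and then applies van der Waerden's primitive-element argument one $c_k$ at a time to the two-generator field $K(w_{k-1}, z_k)$, where $w_{k-1} = z_1 + c_2 z_2 + \cdots + c_{k-1}z_{k-1}$, obtaining a per-step budget of at most $m(m-1)$ bad values of $c_k$. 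So your argument is global (one big hyperplane arrangement) while the paper's is local (a tower of two-generator primitive-element questions).

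The gap you yourself flag --- degenerate specializations where the restriction of $\Delta$ to the next variable collapses --- is a real one, and the proposed fix (``absorbed into the inductive count factor-by-factor'') is not developed into an argument. Concretely, if for some pair $(i,j)$ the coefficient $(\alpha_i - \alpha_j)(z_n)$ vanishes, then the factor $y_i - y_j$ imposes no constraint on $c_n$ at all, and on the sublocus of $(c_2,\ldots,c_{n-1})$ where the remaining part of that factor also vanishes, every value of $c_n$ is bad; no per-variable budget in the $c_n$-direction can be extracted from that factor. Knowing that $y_i - y_j$ is nonzero as a form in \emph{all} the variables does not control which coordinate directions it actually involves, which is exactly what the iterated projection needs. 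The paper's filtration sidesteps this: by always considering $K(w_{k-1}+c_k z_k)$ versus $K(w_{k-1}, z_k)$, it gets the uniform bound $m(m-1)$ on bad $c_k$ no matter how the earlier coordinates were chosen (and the bound is vacuous when $z_k$ already lies in the previous field). To make your approach rigorous you would effectively have to reintroduce a tower with one ``old'' and one ``new'' generator at each step, which is what the paper's proof does.
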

\begin{proof}
Let $z_k=e_k(\vec{x})$ and $L_k=K(z_1,\ldots, z_k)$.  We have $L_1\subset\cdots\subset L_n = L$, where the last equality follows from the fact that $x$ is a root of the monic polynomial $X^n + \sum_{k=1}^n (-1)^ke_kX^{n-k}$ in $L_n[X]$, since $\idx{L(x)}{L}=n$.
From the proof of the primitive element theorem in \cite[\S 6.10]{Waerden:Algebra1}, we know that $K(z_1,z_2)=K(z_1+c_2z_2)$ for all $c_2\in K$ not of the form $(u_i-u_1)/(v_j-v_1)$, with $j>1$, where  $z_1=u_1,u_2,\ldots,u_r$ are conjugates in $L/K$, and $z_2=v_1,v_2,\ldots,v_s$ are conjugates in $L/K$.
Since $r,s\le m$, there can be at most $m(m-1)$ such $c_2$.  The same argument shows that $K(z_1+c_2z_2+\cdots+c_{k-1}z_{k-1},z_k) = K(z_1+c_2z_2+\cdots+c_kz_k)$ for all but at most $m(m-1)$ values of $c_k\in K$, and the lemma follows.
\end{proof}

In the construction above, if $L=K(s(\vec{x}))$ holds for any $G$-orbit~$\vec{x}$, then it necessarily holds for every $G$-orbit contained in the same $\Gal(M/K)$-orbit, and in this case the $s$-images $y_i$ of these $G$-orbits are distinct, since they are $\Gal(L/K)$-conjugates.
To find such an $s$ explicitly, we pick random integer coefficients $c_2,\ldots,c_n$ uniformly distributed over $[0,2m^2-1]$, and let $s=e_1+c_2e_2+\cdots+c_ke_k$.
By Lemma~\ref{lemma:symmetric}, we then have $L=K(s(\vec{x}))$ with probability at least $1-2^{1-n}$.
In the event that $L\ne K(s(\vec{x}))$, we simply pick a new set of random coefficients and repeat until we succeed, yielding a Las Vegas algorithm.
In the trivial case, $n=1$ and we succeed on the first try with $s=e_1$; for $n>1$ we expect to succeed with at most $1/(1-2^{1-n}) \le 2$ attempts, on average.

\subsection{Orbit enumeration}\label{subsection:torsor}
Recall that $\EllO(\Fp)$ is a torsor for $\cl(\O)$.  Each subgroup $G$ of $\cl(\O)$ partitions $\EllO(\Fp)$ into $G$-orbits that correspond to cosets of~$G$.
Given $j_1\in\EllO(\Fp)$, the set $\EllO(\Fp)$ may be enumerated via \cite[Alg.~1.3]{Sutherland:HilbertClassPolynomials}, but to correctly
identify its $G$-orbits we require some further refinements.

As in \cite[\S 5]{Sutherland:HilbertClassPolynomials}, we represent $\cl(\O)$ using a polycyclic presentation defined by a sequence of ideals $\lf_1,\ldots,\lf_k$ of prime norms whose classes generate $\cl(\O)$.
The \emph{relative order} of $[\lf_i]$ is the least positive integer $r_i$ for which $[\lf_i^{r_i}] \in \langle [\lf_1], \ldots, [\lf_{i-1}] \rangle$.
Every element $[\af]$ of $\cl(\O)$ may be uniquely written in the form
\[
[\af] = [\lf_1^{e_1}]\cdots[\lf_k^{e_k}],
\]
with $0 \leq e_i < r_i$.
Having fixed a polycyclic presentation, we may enumerate $\cl(\O)$ by enumerating the corresponding exponent vectors $(e_1,\ldots,e_k)$.
Moreover, for any subgroup $G$ of $\cl(\O)$, we can easily identify the exponent vectors corresponding to each coset of $G$.
This allows us to distinguish the $G$-orbits of $\EllO(\Fp)$, provided that we can unambiguously compute the actions of $[\lf_1]$,\ldots,$[\lf_k]$.

Let $j_1=j(E_1)$ be an element of $\in\EllO(\Fp)$ and let $\lf$ be an invertible $\O$-ideal of prime norm $\ell \ne p$.
Then $j_2=[\lf]j_1$ is the $j$-invariant of an elliptic curve $E_2$ that is $\ell$-isogenous to $E_1$.
We may obtain $j_2\in\Fp$ as a root of $\phi(X)=\Phi_\ell(j_1,X)$, where $\Phi_N\in\Z[X,Y]$ is the \emph{classical modular polynomial} \cite[\S 69]{Weber:Algebra} that parameterizes cyclic isogenies of degree $N$.
When $[\lf]$ has order 2, there is just one distinct root of $\phi(X)$ that lies in $\EllO(\Fp)$, but in general there will be two: $[\lf]j_1$ and $[\bar\lf]j_1$.
Typically these are the only roots of $\phi(X)$ in $\Fp$, and when they are not, they may be distinguished as the roots that lie on the surface of an $\ell$-volcano, see \cite[\S 4]{Sutherland:HilbertClassPolynomials} or \cite{Ionica:PairingTheVolcano}.
The only ambiguity lies in distinguishing the actions of $[\lf]$ and $[\bar\lf]$, which correspond to the two directions we may walk along the cycle  of $\ell$-isogenies on the surface.

To enumerate $\EllO(\Fp)$ it is not necessary to distinguish these directions, as shown in \cite[Prop.~5]{Sutherland:HilbertClassPolynomials}.
However, it \emph{is} necessary, in general, if we wish to identify the $G$-orbits of $\EllO(\Fp)$ in this enumeration.
To see why, let $\cl(\O)$ be a cyclic group of order 6 generated by $[\af]$, and suppose our polycyclic presentation has $[\lf_1]=[\af^2]$ with $r_1=3$, and $[\lf_2]=[\af]$ with $r_2=2$.
Below are four of the eight possible enumerations of $\EllO(\Fp)$ that might be produced by Algorithm 1.3 of \cite{Sutherland:HilbertClassPolynomials} in this scenario:
\smallskip

\begin{center}
\begin{tikzpicture}
\scriptsize
  \tikzstyle{vertex}=[circle,draw=black,fill=blue!30,minimum size=9pt,inner sep=0pt]
  \tikzstyle{svertex}=[circle,draw=black,fill=red!10,minimum size=9pt,inner sep=0pt]

  \foreach \name/\x/\y in {            4/0.8/1.6, 2/1.6/1.6,
  												 5/00.0/0.8,            3/1.6/0.8}
    \node[vertex] (G-\name) at (\x,\y) {$\name$};
  \foreach \name/\x/\y in {0/0.0/1.6, 
                                       1/0.8/0.8}
    \node[svertex] (G-\name) at (\x,\y) {$\name$};
  \foreach \from/\to in {0/4,0/5,5/1}
    \draw[line width=1.0pt,->] (G-\from) -- (G-\to);
  \foreach \y in {1.23} \node at (-0.15,\y) {{$\bar\lf_2$}};
  \foreach \x in {0.4,1.2} \node at (\x,1.76) {{$\bar\lf_1$}};
  \foreach \x in {0.4,1.2} \node at (\x,0.95) {{$\lf_1$}};
  \foreach \from/\to in {4/2,1/3}
    \draw[->] (G-\from) -- (G-\to);

 \foreach \name/\x/\y in {            4/3.8/1.6, 2/4.6/1.6,
  												 1/3.0/0.8,            3/4.6/0.8}
    \node[vertex] (G-\name) at (\x,\y) {$\name$};
  \foreach \name/\x/\y in {0/3.0/1.6, 
                                       5/3.8/0.8}
    \node[svertex] (G-\name) at (\x,\y) {$\name$};
  \foreach \from/\to in {0/4,0/1,1/5}
    \draw[line width=1.0pt,->] (G-\from) -- (G-\to);
  \foreach \y in {1.23} \node at (2.85,\y) {{$\lf_2$}};
  \foreach \x in {3.4,4.2} \node at (\x,1.76) {{$\bar\lf_1$}};
  \foreach \x in {3.4,4.2} \node at (\x,0.95) {{$\bar\lf_1$}};
  \foreach \from/\to in {4/2,5/3}
    \draw[->] (G-\from) -- (G-\to);
 
 \foreach \name/\x/\y in {            4/6.8/1.6, 2/7.6/1.6,
  												 5/6.0/0.8,            1/7.6/0.8}
    \node[vertex] (G-\name) at (\x,\y) {$\name$};
  \foreach \name/\x/\y in {0/6.0/1.6, 
                                       3/6.8/0.8}
    \node[svertex] (G-\name) at (\x,\y) {$\name$};
  \foreach \from/\to in {0/4,0/5,5/3}
    \draw[line width=1.0pt,->] (G-\from) -- (G-\to);
  \foreach \y in {1.23} \node at (5.85,\y) {{$\bar\lf_2$}};
  \foreach \x in {6.4,7.2} \node at (\x,1.76) {{$\bar\lf_1$}};
  \foreach \x in {6.4,7.2} \node at (\x,0.95) {{$\bar\lf_1$}};
  \foreach \from/\to in {4/2,3/1}
    \draw[->] (G-\from) -- (G-\to);

 \foreach \name/\x/\y in {            2/9.8/1.6, 4/10.6/1.6,
  												 1/9.0/0.8,            5/10.6/0.8}
    \node[vertex] (G-\name) at (\x,\y) {$\name$};
  \foreach \name/\x/\y in {0/9.0/1.6, 
                                       3/9.8/0.8}
    \node[svertex] (G-\name) at (\x,\y) {$\name$};  \foreach \from/\to in {0/1,0/2,1/3}
    \draw[line width=1.0pt,->] (G-\from) -- (G-\to);
  \foreach \y in {1.23} \node at (8.85,\y) {{$\lf_2$}};
  \foreach \x in {9.4,10.2} \node at (\x,1.76) {{$\lf_1$}};
  \foreach \x in {9.4,10.2} \node at (\x,0.95) {{$\lf_1$}};
  \foreach \from/\to in {2/4,3/5}
    \draw[->] (G-\from) -- (G-\to);

\normalsize
\end{tikzpicture}
\end{center}
\smallskip

Each node corresponds to a $j$-invariant $[\af^e]j_1$ and is labeled by the exponent $e$.
The arrows indicate the action of the ideal that appears in the label, and bold arrows indicate where an arbitrary choice was made.
The two lightly shaded nodes in each configuration correspond to the expected positions of the elements of $\EllO(\Fp)$ corresponding to the subgroup $G=\langle\af^3\rangle$ of order 2.
The two configurations on the right yield a correct enumeration of each $G$-orbit, but the two on the left do not.

\medskip
\noindent
{\bf Remark 1.}
Of course we could have used a polycyclic presentation with $[\lf_1]=[\af]$ and $r_1=6$ in this example, but unless $[\af]$ happens to contain the invertible $\O$-ideal of least prime norm, this is suboptimal, since the cost of computing the action of~$[\lf]$ increases quadratically with the norm of $\lf$.
The optimal polycyclic presentation rarely has one generator, even when $\cl(\O)$ is cyclic.
\medskip

We now consider ways to enumerate $\EllO(\Fp)$ that allow us to identify its $G$-orbits.
First,  the GCD technique of \cite[\S 2.3]{EngeSutherland:CRTClassInvariants} may be used to consistently choose $\lf_i$ or $\bar\lf_i$ each time a new path of $\ell_i$-isogenies is begun (this actually speeds up the enumeration, so it should be done in any case).
We are then left with just $k$ choices, one for each~$\lf_i$.
To consistently orient these choices we may either: (a) use auxiliary relations $[\af_i]=[\lf_1\cdots\lf_i]$, where $\af_i$ has (small) prime norm different from $\ell_1,\ldots,\ell_i$, as described in \cite[\S 4.3]{EngeSutherland:CRTClassInvariants}, or (b) compute the action of Frobenius on the kernel of the two isogenies, as discussed in \cite[\S 4]{BrokerLauterSutherland:CRTModPoly} and the references therein.
Option (a) is fast and easy to implement, but for the best space complexity we should use~(b), since it does not require us to store the entire enumeration.
A minor drawback to option (b) is that it requires $\ell_i\nmid v$, where $v$ is as in the norm equation (\ref{eq:norm}).

However, in many cases neither (a) nor (b) is necessary.
If $G$ is of the form
\begin{equation}\label{eq:goodH}
G=\langle [\lf_1],\ldots,[\lf_{d-1}],[\lf_d^e]\rangle
\end{equation}
for some $d\le k$ and $e|r_d$, then it follows from the proof of \cite[Prop.~5]{Sutherland:HilbertClassPolynomials} that any enumeration of $\EllO(\Fp)$ output by \cite[Alg.~1.3]{Sutherland:HilbertClassPolynomials} also gives a correct enumeration of the $G$-orbits of $\EllO(\Fp)$.
In the context of the CM method, we are free to choose~$G$, and we can always choose one that satisfies (\ref{eq:goodH}).  This may limit our choices for $G$, but in practice this restriction is usually not a burden.

\section{A first algorithm}\label{section:algorithm1}

Our first algorithm is a direct implementation of the theory presented in \S \ref{section:background}, which can significantly accelerate the CM method in the typical case where the class number $h(D)$ is composite.
At this stage we shall not be concerned with improving space complexity; this will be the focus of our second algorithm.
Both algorithms are probabilistic (of Las Vegas type).

As above, $\O$ is an imaginary quadratic order with discriminant~$D$, and we shall assume $h=h(D)>1$.
Let $\PD$ be the set of primes that split completely in $K_\O$, equivalently, primes that satisfy (\ref{eq:norm}).
Given $D$ and a prime $q\in\PD$ we wish to obtain a root of the Hilbert class polynomial $H_D$ over $\Fq$.
Such a root is the $j$-invariant of an elliptic curve $E/\Fq$ with CM by $\O$, as required by the CM method.

The first step is to choose a subgroup $G$ of $\cl(O)$, which determines $n=|G|$ and $m=h/n$.
The quantities $\theta_{ik}$ and $y_i$, $V(Y)$, $W_k(Y)$, and $U(X,Y)$, are then defined as  in \S \ref{subsection:decomp},
with $1\le i \le m$ and $0\le k < n$ (we do not need $k=n$).

The $y_i$ (and therefore $V(Y)$) depend on the choice of a function $s\in\Z[X_1,\ldots,X_n]$ that is a randomly chosen linear combination of the elementary symmetric functions $e_1,\ldots,e_n$, as described at the end of \S \ref{section:bound}.
In the unlikely event that we pick a bad~$s$, we may find that $V$ is a perfect power.
In this case the algorithm simply repeats the computation with a new choice of $s$.

We also require a bound $B$ on the coefficients of $V$ and $W_k$.  The computation of $B$ is addressed in \S \ref{section:bound}.
We now give the algorithm.

\renewcommand\labelenumi{\theenumi.}
\renewcommand\labelenumii{\theenumii.}
\algstart{{\bf 1}}{Given $D$ and $q\in\PD$, find a root $x$ of $H_D$ in $\Fq$ as follows:}
\algitem Select a subgroup $G$ of $\cl(O)$ with $2|G|^2\le q$ and let $n=|G|$.
\algitem Generate random integers $c_2,\ldots,c_n$ uniformly distributed over $[0,2m^2-1]$ and set $s=e_1+c_2e_2+\cdots+c_ne_n$.
\algitem Compute a bound $B$ as described in \S 5.
\algitem As in steps 1-3 of \cite[Alg.~2]{Sutherland:HilbertClassPolynomials}, use $B$ to select $S\subset\PD$, compute a polycyclic presentation $\Gamma$ for $\cl(O)$, and perform CRT precomputation (see \S\ref{subsection:crt}).
\algitem For each $p\in S$:
\begin{enumerate}
\item
Find $j_1\in\EllO(\Fp)$ as in \cite[Alg.~1]{Sutherland:HilbertClassPolynomials}.
\item
Enumerate the $G$-orbits $G_i$ of $\EllO(\Fp)$ using $j_1$ and $\Gamma$.
\item
Compute the $\theta_{ik}$ and the $y_i \bmod p$ (using $s$), as described in \S\ref{subsection:decomp}
\item
Compute $V$ and the $W_k \bmod p$.
\item
Update CRT data for the coefficients of $V$ and the $W_k$.
\end{enumerate}
\algitem Perform CRT postcomputation to obtain $V$ and the $W_k\bmod q$.
\algitem Working in $\Fq$, find a root $y$ of $V$ that is not a root of $V'$.\\
If no such root exists then return to step 2.
\algitem Compute $U_y(X)=U(X,y)\bmod q$ and output a root $x$ of $U_y$ in $\Fq$.
\algend

It follows from \S \ref{subsection:decomp} that the output value $x$ is a root of $H_D$ in $\Fq$, thus the algorithm is correct.
Lemma~\ref{lemma:symmetric} guarantees that it always terminates, and that its expected running time is no more than twice the expected time when the first choice of $s$ works (this factor approaches 1 as $n$ increases).

\medskip
\noindent
{\bf Remark 2.}
For practical implementation one may prefer to fix $s=e_1$ (or $s=-e_1$) and instead change the choice of $G$ if $s=e_1$ does not work.  Using $s=e_1$ simplifies the implementation and can reduce the bound~$B$ significantly.
Empirically, for large values of $|D|$ and $q$, using $s=e_1$ is very likely to work with every choice of $G$.
Alternatively, one may start with $s=e_1$ and then switch to a random $s$ if necessary.
In all our examples, including all the computations in \S \ref{section:performance}, and in the heuristic analysis of \S\ref {subsection:heuristics}, we use $s= \pm e_1$, but our mathematical results (Propositions 1-3) all assume a random  $s$, as specified in Algorithm~1.
\vspace{6pt}

We note three immediate generalizations of Algorithm~1.
First, other class invariants may be treated with suitable modifications to step 2, see \cite{EngeSutherland:CRTClassInvariants}.
Second, it is not necessary for $q$ to be prime; $q$ may be a prime power $q_0^e$ satisfying $4q=t^2-v^2D$ with $t\not\equiv 0\bmod q_0$.
One then computes $V$ and the~$W_k$ mod $q_0$, and performs the root-finding operations in~$\Fq$.
Third, we can compute $V$ and the $W_k$ over $\Z$ using the standard CRT: replace $q$ by the product of the primes in $S$ and lift the results of step 4 to $\Z$.
Steps~7 and~8 may then later be applied to any $q$ that splits completely in the ring class field.

\subsection{Example}\label{subsection:example1}
Let us find a root of $H_D\bmod q$ using Algorithm~1, with $D=-971$ and $q=1029167$.
The class group is cyclic of order 15, and the optimal polycyclic presentation\footnote{The symmetry of the $\ell_i$ and $r_i$ in this small example is entirely coincidental.} has norms $\ell_1=3$ and $\ell_2=5$ and relative orders $r_1=5$ and $r_2=3$.
We choose $G$ to be the subgroup of order $n=5$, which is conveniently of the form~(\ref{eq:goodH}), so we need not distinguish directions when enumerating $\EllO(\Fp)$.  For convenience we set $s=-e_1$, so that $y_i=\theta_{i,n-1}$).

Computing the bound $B$ as in \S \ref{section:bound}, we have $b=\log_2 B \approx 340$ bits, and select a set of primes in $\PD$ whose product exceeds $4B$.
As described in \cite[\S 3]{Sutherland:HilbertClassPolynomials}, we choose primes that optimize the search for $j_1\in\EllO(\Fp)$, obtaining a set of 27 primes:
\[
S = \{263,353,1871,\ldots,38677,43237,62873\}.
\]
We then precompute parameters for the explicit CRT (mod $q$), using \cite[Alg~2.3]{Sutherland:HilbertClassPolynomials}.

For $p=263$ we find $j_1=252$, and enumerate $\EllO(\Fp)$ from $j_1$ as:
\medskip

\begin{center}
\scriptsize
\begin{tikzpicture}
  \tikzstyle{vertex}=[shape=rectangle,draw=black,fill=blue!10,minimum size=12pt,inner sep=1pt,rounded corners=4pt]

  \foreach \name/\x/\y in {252/0.0/2.7,  38/1.0/2.7, 151/2.0/2.7, 121/3.0/2.7, 258/4.0/2.7,
                            70/0.0/1.8, 112/1.0/1.8, 182/2.0/1.8, 198/3.0/1.8, 140/4.0/1.8,
						   202/0.0/0.9, 130/1.0/0.9, 183/2.0/0.9, 196/3.0/0.9, 136/4.0/0.9}
    \node[vertex] (G-\name) at (\x,\y) {$\name$};
  \foreach \from/\to in {252/38,38/151,151/121,121/258,70/112,112/182,182/198,198/140,202/130,130/183,183/196,196/136,252/70,70/202}
    \draw[black,->] (G-\from) -- (G-\to);
  \foreach \y in {1.36,2.26} \node at (-0.11,\y) {{$5$}};
  \foreach \x in {0.5,1.5,2.5,3.5} \node at (\x,2.83) {{$3$}};
  \foreach \x in {0.5,1.5,2.5,3.5} \node at (\x,1.93) {{$3$}};
  \foreach \x in {0.5,1.5,2.5,3.5} \node at (\x,1.03) {{$3$}};
\end{tikzpicture}
\end{center}
\medskip
\noindent
The horizontal arrows denote 3-isogenies and the vertical arrows are 5-isogenies.
The three $G$-orbits of $\EllO(\Fp)$ are $\{252,38,151,121,258\}$, $\{70,112,182,198,140\}$, and $\{202,130,183,196,136\}$, corresponding to the rows in the diagram above.
Note that these orbits do not depend on the choice of direction made at the start of each line of isogenies, nor do they depend on the choice of $j_1$.

Continuing with $p=263$, we compute the products
\begin{align*}
P_1(X) &\equiv_p (X-252)(X-38)(X-151)(X-121)(X-258),\\
P_2(X) &\equiv_p (X-70)(X-112)(X-182)(X-198)(X-140),\\
P_3(X) &\equiv_p (X-202)(X-130)(X-183)(X-196)(X-136).
\end{align*}
Each $\theta_{ik}$ is obtained as the coefficient of $X^k$ in the polynomial $P_i$:
\begin{align*}
P_1(X) &\equiv_p X^5 + 232X^4 + 32X^3 + 159X^2 + 208X + 158,\\
P_2(X) &\equiv_p X^5 + 87X^4 + 252X^3 + 139X^2 + 103X + 21,\\
P_3(X) &\equiv_p X^5 + 205X^4 + 86X^3 + 113X^2 + 121X + 116.
\end{align*}
We then set $y_1=\theta_{14}=232$, $y_2=\theta_{24}=87$, and $y_3=\theta_{34}=205$.  Using the values $y_i$ and the $\theta_{ik}$, we compute
\begin{align*}
V(Y) &= (Y-y_1)(Y-y_2)(Y-y_3) \equiv_p Y^3 + 2\thinspace Y^2 + 104\thinspace Y + 59,\\
W_0(Y) &= \sum\theta_{i0}V(Y)/(Y-y_i) \equiv_p 32\thinspace Y^2 + 259\thinspace Y + 152,\\
W_1(Y) &= \sum\theta_{i1}V(Y)/(Y-y_i) \equiv_p 169\thinspace Y^2 + 41\thinspace Y + 153,\\
W_2(Y) &= \sum\theta_{i2}V(Y)/(Y-y_i)  \equiv_p 148\thinspace Y^2 + 117\thinspace Y + 277,\\
W_3(Y) &= \sum\theta_{i3}V(Y)/(Y-y_i)  \equiv_p 107\thinspace Y^2 + 115\thinspace Y + 244,
\end{align*}
We do not need $W_4$ because $y_i=\theta_{i,n-1}$ implies $W_{n-1}(y_i)=y_iV'(y_i)$,
hence the coefficient of $X^{n-1}$ in $U(X,y_i)$ is just $y_i$.
We complete our work for $p=263$ by updating the CRT coefficient data (mod $q$) for the $15$ nontrivial coefficients above, using \cite[Alg~2.4]{Sutherland:HilbertClassPolynomials}.
The same procedure is then applied for each $p\in S$.

Having processed all the primes in $S$, a small postcomputation step \cite[Alg~2.5]{Sutherland:HilbertClassPolynomials} yields $V$ and the $W_k$ modulo $q$:
\begin{align*}
V(Y) &\equiv_q Y^3 + 947907\thinspace Y^2 + 829791\thinspace Y + 760884,\\
W_0(Y) &\equiv_q 975377\thinspace Y^2 + 130975\thinspace Y + 363724,\\
W_1(Y) &\equiv_q 240332\thinspace Y^2 + 135971\thinspace Y + 616131,\\
W_2(Y) &\equiv_q 126738\thinspace Y^2 + 479879\thinspace Y + 908580,\\
W_3(Y) &\equiv_q 340801\thinspace Y^2 + 1000285\thinspace Y + 68659.
\end{align*}
The roots of $V\bmod q$ are $y_1 = 336976$, $y_2 = 898530$, and $y_3=904088$, none of which are roots of $V'\bmod q$.
Using $y=y_1$ we compute 
\begin{align*}
U(X,y) &= X^5 + yX^4 + \frac{1}{V'(y)}\Bigl(W_3(y)X^3 + W_2(y)X^2 + W_1(y)X + W_0(y)\Bigr),\\
     &\equiv_q X^5 + 336976X^4 + 556976X^3 + 849678X^2 + 363260X + 95575,
\end{align*}       
and we then find that $x=590272$ is a root of $U(X,y)$, and hence of $H_D$, modulo~$q$.

This completes the execution of Algorithm~1 on the inputs $D=-971$ and $q=1029167$.
If we now set $k=x/(1728-x) \equiv_q 638472$, then the elliptic curve $E/\Fq$ defined by
\[
Y^2 = X^3 + 3kX + 2k \equiv_q X^3 + 886249X + 247777
\]
has complex multiplication by the quadratic order with discriminant $-971$.
%

\subsection{Complexity}\label{subsection:time1}
The running time of Algorithm 1 has have four principal components.
Let us define $\Tf$, $\Te$, and $\Tb$ (respectively) as the average expected time, over $p\in S$, to: find $j_1\in\EllO(\Fp)$ (step 5a), enumerate the $G$-orbits of $\EllO(\Fp)$ (step 5b), and build the polynomials $V$ and $W_k$ modulo $p$ (steps 5c and~5d).
Additionally, let $\Tr$ be the expected time to find a root $y$ of $V\bmod q$ and to compute and find a root of $U(X,y)\bmod q$ (steps 7 and 8).
As shown in \cite{Sutherland:HilbertClassPolynomials}, the cost of the precomputation in step 4 and the total cost of all CRT computations are negligible, as are steps 1-3.
Thus the total expected running time is
\begin{equation}\label{eq:total1}
O\bigl(|S|(\Tf + \Te + \Tb) + \Tr\bigr).
\end{equation}

When $G$ is trivial, or equal to $\cl(\O)$, Algorithm~1 reduces to the standard CM method, computing $H_D\bmod q$ as in 
\cite[Alg.~2]{Sutherland:HilbertClassPolynomials}.  In this case, under the GRH we have the following bounds:

\setlength{\extrarowheight}{2pt}
\begin{center}
\begin{tabular}{ll}
$|S|$ & $=O(|D|^{1/2}\log\log|D|),$\\
$\Tf$ & $=O(h\log^{5+\epsilon}h),$\\
$\Te$ & $=O(h\log^{5+\epsilon}h),$\\
$\Tb$ & $=O(h\log^{3+\epsilon}h),$\\
$\Tr$ & $=O(h\log h\log^{2+\epsilon} q).$\\
\end{tabular}
\end{center}
\noindent
The first four bounds are proved in Lemmas 7 and 8 of \cite{Sutherland:HilbertClassPolynomials}.

The last bound is based on the standard probabilistic root-finding algorithm of \cite[\S 7]{Berlekamp:PolyFactoringLargeFF}, using fast arithmetic in $\Fp[x]$.
With Kronecker substitution, multiplying two polynomials of degree $d$ in $\Fq[x]$ uses $O(\M(d\log q))$ bit operations, and this
yields an $O(\M(d\log q)\log q)$ bound on the expected time to find a single root of a polynomial in $\Fq[x]$ of degree $d$.
Here $\M(n)$ denotes the time time to multiply two $n$-bit integers \cite[Def.~8.26]{Gathen:ComputerAlgebra}, which we assume to be superlinear, and which satisfies the Sch\"onhage-Strassen bound $\M(n)=O(n\log n\log\log n)$; see \cite{Schonhage:Multiplication}.

When $n=|G|$ properly divides the class number~$h$, the times for $\Tb$ and $\Tr$ may change; these are analyzed below.
The times $\Tf$ and $\Te$ are independent of the choice of $G$, as is the space complexity.
Depending on the bound~$B$, the size of $S$ may also be reduced.
This can lead to a substantial practical improvement, depending on the choice of $G$ and the value of $s$, but we defer this issue to \S \ref{section:bound}.
For the moment we simply note that the bound on $|S|$ above holds for any choice of $G$ and for every~$s$.

We now consider the time $\Tb$.
Computing the $\theta_{ik}$ in step 5c via (\ref{eq:Pi}) involves building $m$ polynomials $P_i$ of degree~$n$ in $\Fp[X]$ as products of their linear factors.
Let $\M(n)$ denote the time to multiply two $n$-bit integers \cite[Def.~8.26]{Gathen:ComputerAlgebra}, which we assume to be superlinear.
With Kronecker substitution, multiplying two polynomials of degree $n$ in $\Fp[x]$ uses $O(\M(n\log p))$ bit operations.
Using a product tree for each $P_i$ yields the bound\footnote{Our bounds count bit operations and hold for all constants $\epsilon > 0$ (and often for $\epsilon=o(1)$).}
\begin{equation}\label{eq:Thetacost}
O(m\M(n\log p)\log n) \subset O(h\log^2 n\log^{1+\epsilon}p)
\end{equation}
on the cost of computing the $\theta_{ik}$.
Here we have used $4p > |D| > h \ge n$ and the $O(n\log n\log\log n)$ bound \cite{Schonhage:Multiplication} for $\M(n)$, which we note applies to the algorithms used in our implementation.
The cost of computing the $y_i$ is also dominated by the time to build $m$ polynomials of degree~$n$ as products of their linear factors: for each $G$-orbit $G_i$ we compute the polynomial $\prod_{j\in G_i}(X-j)\in\Fp[X]$ whose coefficients are the values of the symmetric functions $e_1,\ldots,e_n$ on $G_i$ (up to a sign), from which we compute the linear combination $y_i=s$.

The cost of building $V$ as a product of its linear factors is $O(\M(m\log p)\log m)$, which is dominated by the cost of computing the $n$ polynomials $W_k$ as linear combinations of $V(Y)/(Y-y_i)$ with coefficients $\theta_{ik}$.
Using a recursive algorithm to compute each $W_k$ as in \cite[Alg.~10.9]{Gathen:ComputerAlgebra}, we obtain a total cost of
\begin{equation}\label{eq:Wcost}
O(n\M(m\log p)\log m) \subset O(h\log^2 m\log^{1+\epsilon} p)
\end{equation}
for each iteration of step 5d.
The sum of (\ref{eq:Thetacost}) and (\ref{eq:Wcost}) is $O(h\log^2 h\log^{1+\epsilon} p)$, essentially the same as the cost of building $H_D\bmod p$ from its linear factors.
There is an improvement in the implicit constants, but asymptotically we gain at most a factor of~2 in the time $\Tb$.

We may gain much more in the time $\Tr$.
The total expected time for steps 7 and 8 of Algorithm~1 is bounded by
\begin{equation}\label{eq:tworootcost0}
\Tr = O(\M(m\log q)\log q + h\M(\log q) + \M(n\log q)\log q).
\end{equation}
The three terms in (\ref{eq:tworootcost0}) reflect the time to: (a) find root $y$ of $V\bmod q$ that is not a root of $V'\bmod q$, (b) compute $U_y(X)=U(X,y)\bmod q$, and (c) find a root of $U_y\bmod q$.  In (a), any common roots of $V$ and $V'$ are first removed from $V$ via repeated division by the GCD, which takes negligible time.  We may bound (\ref{eq:tworootcost0}) by
\begin{equation}\label{eq:tworootcost}
\Tr = O(h\log^{1+\epsilon}q + (m+n)\log h\log^{2+\epsilon}q),
\end{equation}
improving $\Tr$ by a factor of $\min\bigl(\log h\log q, mn/(m+n)\bigr)$ compared to the time to find a root of $H_D\bmod q$.
This can reduce the cost of root-finding dramatically, as may be seen in \S \ref{section:performance}.

\section{A second algorithm}\label{section:algorithm2}

Algorithm~1 obtains a root of $H_D\bmod q$ as a root of $U(X,y)\bmod q$,
where $y$ is a root of $V\bmod q$, and $U$ is defined by
\begin{equation}
U(X,Y)=\frac{1}{V'(Y)}\sum_{k=0}^nW_k(Y)X^k.\tag{\ref{eq:U0}}
\end{equation}
To compute $U(X,y)$ we need to evaluate each $W_k$ at $y\in\Fq$.
We observe that it is not necessary to know the coefficients of $W_k$ to do this,
we could instead use
\begin{equation}
W_k(Y)=\sum_{i=1}^m \theta_{ik}\frac{V(Y)}{(Y-y_i)},\tag{\ref{eq:Wk}}
\end{equation}
provided that we know $V$ and the $\theta_{ik}$ (which determine the $y_i$).

Unfortunately we do not know the $\theta_{ik}$ in $\Fq$.
Algorithm~1 computes the $\theta_{ik}$ in $\Fp$, for each prime $p\in S$, but we cannot readily export this knowledge to $\Fq$ because the $\theta_{ik}$ do not correspond to the reductions of rational integers.\footnote{The $\theta_{ik}$ are integers of $G$'s fixed field $L\subset K_\O$.  They do not all lie in $\Q$ unless $G=\cl(\O)$.}
Indeed, the entire reason for using the polynomials $V$ and $W_k$ is that they have coefficients in $\Z$ and are thus defined over any field.

However, there is nothing to stop us from using (\ref{eq:Wk}) to evaluate $W_k$ in $\Fp$.
Given any $z\in\Z$, we can certainly compute $W_k(z)\bmod p$, and if we do this for sufficiently many $p$ we can apply the explicit CRT (mod $q$) to obtain $W_k(z)\bmod q$.

In particular, we can apply this to a lift of $y\in\Fq\cong\Z/q\Z$ to $\Z$.
Explicitly, let $\varphi=\phi\pi$, where $\pi$ is the unique field isomorphism from $\Fq$ to $\Z/q\Z$ and $\phi$ maps each residue class in $\Z/q\Z$ to its unique representative in the interval $[0,q-1]$.
We then have $W_k(\varphi(y))\equiv \varphi(W_k(y))\bmod q$.

Thus it suffices to compute $w_k = W_k(\varphi(y))\bmod q$, and this can be accomplished by computing $w_k\bmod p$ for sufficiently many primes $p$.
Note that while $y$ is a root of $V\bmod q$, when we reduce $\varphi(y)$ modulo $p$, we should not expect to get a root of $V\bmod p$, nor do we need to; we are simply evaluating the integer polynomial $W_k(Y)$ at the integer $\varphi(y)$, modulo many primes $p$.

This leads to our second algorithm, which proceeds in two stages.
The first stage computes $V\bmod q$ and finds a root $y$.
The second stage computes the values $w_k\bmod q$, then computes $U(X,y)\bmod q$ and finds a root $x$.
The second stage requires a bound on $|w_k|$, for which we may use $Bmq^{m-1}$, where $B$ bounds the coefficients of the $W_k$.
For the sake of simplicity we use a single bound for both stages ($mq^{m-1}$ times the bound used in Algorithm 1), but in practice one may compute separate bounds for each stage (in stage 1 it is only necessary to bound the coefficients of $V$).
In order to achieve the best space complexity, certain steps are intentionally repeated, and some may require a more careful implementation, see \S\ref{subsection:space} for details.

As before, the choice of $G\subset\cl(\O)$ in step~1 determines $n=|G|$ and $m=h/n$, and the values $y_i$, $\theta_{ik}$ and $W_k$ are defined for $1\le i\le m$ and $0\le k < n$, where the~$y_i$ depend on the symmetric function $s$ constructed in step 1.

\renewcommand\labelenumi{\theenumi.}
\renewcommand\labelenumii{\theenumii.}
\algstart{{\bf 2}}{Given $D$ and $q\in\PD$, compute a root $x$ of $H_D\bmod q$ as follows:}
\algitem Select $G$, generate a random $s$, and compute $B$, as in Algorithm~1,\\then set $B\leftarrow mq^{m-1}B$.
\algitem Compute a polycyclic presentation $\Gamma$ for $\cl(\O)$.
\algitem Use $B$ to select $S\subset\PD$, and perform CRT precomputation (mod $q$).\footnote{To optimize space this step may need to be performed in an amortized fashion, see \S \ref{subsection:space}.}
\algitem For each $p\in S$:
\begin{enumerate}
\item
Find $j_1\in\EllO(\Fp)$ as in \cite[Alg.~1]{Sutherland:HilbertClassPolynomials}, and cache $j_1(p)=j_1$.
\item
Enumerate the $G$-orbits $G_i$ of $\EllO(\Fp)$ using $j_1$ and $\Gamma$.
\item
Compute the $y_i$ and $V\bmod p$.
\item
Update CRT data for $V\bmod q$.
\end{enumerate}
\algitem Perform CRT postcomputation to obtain $V\bmod q$.
\algitem Find a root $y$ of $V \bmod q$ that is not a root of $V'\bmod q$.\\
If no such root exists then return to step 1.
\algitem For each $p\in S$:
\begin{enumerate}
\item
Let $j_1=j_1(p)$ be the element of $\EllO(\Fp)$ computed in step 4a.
\item
Enumerate the $G$-orbits $G_i$ of $\EllO(\Fp)$ using $j_1$ and $\Gamma$.
\item
Compute the values $\theta_{ik}$, $y_i$, $\phi(y)$ and $V$ mod $p$, and then use them to\\
compute the values $w_k=W_k(\varphi(y))$ mod $p$, via the formula in (\ref{eq:Wk}).\footnote{It is possible to do this without explicitly computing $V\bmod p$, see \S\ref{subsection:example2}.}
\item
Update CRT data for the $w_k\bmod q$.
\end{enumerate}
\algitem Perform CRT postcomputation to obtain the $w_k\bmod q$.
\algitem Compute $U_y(X)=U(X,y)\bmod q$ and output a root $x$ of $U_y\bmod q$.
\algend

Computing the $w_k$ in step 7c via (\ref{eq:Wk}) is faster than computing the coefficients of~$W_k$, by a factor of $\log^2 m$.
For a suitable choice of $G$ (with $n=|G|$ small, say polylogarithmic in $h$), this may improve the time complexity of the entire algorithm, as shown in \S \ref{subsection:time2}.
However, it is often better to choose $G$ to optimize the bound~$B$, as described in \S \ref{section:bound}, which will tend to make $G$ large.

More significantly, computing the scalars $w_k$ rather than the polynomials $W_k$ reduces the space complexity to $O(h\log h + (m+n)\log q)$, which may be much better than the $O(h\log q)$ space complexity of Algorithm~1 when $q$ is large.
This can even be improved to $O((m+n)\log q)$ using a more intricate implementation, but this may increase the time complexity slightly.
The details are given in \S \ref{subsection:space}, where the space complexity of Algorithm~2 is analyzed.


\subsection{Example}\label{subsection:example2}
Let us revisit the example of \S \ref{subsection:example1}, with $D=-971$ and $q=1029167$.
As before, the class group is cyclic of order $h=15$, we let $G$ be the subgroup of order $n=5$, and set $s=-e_1$ so that $y_i=\theta_{i,n-1}$.

The first stage of Algorithm~2 proceeds as in Algorithm~1, except that our height bound is now $\log_2 (mq^{m-1}B)\approx 366$ bits, so we select a slightly larger set $S$, with 29 primes whose product exceeds $4mq^{m-1}B$.
For $p=263$ we again find that the three $G$-orbits of $\EllO(\Fp)$ are $\{252,38,151,121,258\}$, $\{70,112,182,198,140\}$, and $\{202,130,183,196,136\}$.
But instead of computing all the $\theta_{ik}$ as in Algorithm~1, we only need to compute
\begin{align*}
y_1 &\equiv_p -(252 + 38 + 151 + 121 + 258) \equiv_p 232,\\
y_2 &\equiv_p -(70 + 112 + 182 + 198 + 140) \equiv_p 87,\\
y_3 &\equiv_p -(202 + 130 + 183 + 196 + 136) \equiv_p 205,
\end{align*}
which yields
\[
V(Y) = (Y-y_1)(Y-y_2)(Y-y_3) \equiv_p Y^3 + 2\thinspace Y^2 + 104\thinspace Y + 59.
\]
After computing $V\bmod p$ for all $p\in S$ we obtain, via the explicit CRT (mod $q$),
\[
V(Y) \equiv_q Y^3 + 947907\thinspace Y^2 + 829791\thinspace Y + 760884,
\]
and we again find that $y=336976$ is a root of $V$ and not $V'$.
We now lift $y$ from $\Fq\cong\Z/q\Z$ to the integer $\varphi(y)=336976$ and begin the second stage.

For $p=263$ we recompute the three $G$-orbits as above, and this time we compute all of the $\theta_{ik}$, as we did in Algorithm~1.
Setting $y_1=\theta_{14}=49$, $y_2=\theta_{24}=87$, and $y_3=\theta_{34}=205$, we recompute $V\bmod p$ as above.

We now let $z$ be the reduction of $\varphi(y)\bmod p$ and find that $z=73$ (which is not a root of $V\bmod p$, as expected).
We then compute
\begin{align*}
z_1 &= V(z)/(z-y_1) = (z-y_2)(z-y_3)\equiv_p 7,\\
z_2 &= V(z)/(z-y_2) = (z-y_1)(z-y_3)\equiv_p 211,\\
z_3 &= V(z)/(z-y_3) = (z-y_1)(z-y_2)\equiv_p 122.
\end{align*}
The $z_i$ can be computed in two ways: (a) evaluate $V(z)$, invert the $(z-y_i)$, and compute the $z_i$'s as products, or (b) simultaneously compute the $z_i$ as the complements of the $(z-y_i)$ using a product tree, as in \cite[\S 6.1]{Sutherland:HilbertClassPolynomials}.
Both use $O(m)$ field operations, but (b) is faster and works even when $z$ is a root of $V\bmod p$.

We now compute the $w_k$ as linear combinations of the $z_i$ with coefficients $\theta_{ik}$:
\begin{align*}
w_0 &= W_0(z) = \theta_{10} z_1 + \theta_{20} z_2 + \theta_{30} z_3 \equiv_p 227,\\
w_1 &= W_1(z) = \theta_{11} z_1 + \theta_{21} z_2 + \theta_{31} z_3 \equiv_p 79,\\
w_2 &= W_0(z) = \theta_{12} z_1 + \theta_{22} z_2 + \theta_{32} z_3 \equiv_p 44,\\
w_3 &= W_0(z) = \theta_{13} z_1 + \theta_{23} z_2 + \theta_{33} z_3 \equiv_p 242.
\end{align*}
When evaluated at $z$, the polynomials $W_k\bmod p$ computed by Algorithm 1 yield the same values $w_k$ above.
But here we obtained the $w_k$ without computing the~$W_k$, using just $O(h)$ operations to compute them directly from the $y_i$ and the $\theta_{ik}$.
Most importantly, the CRT data used to compute the $w_k\bmod q$ only consumes $O(m\log q)$ space, versus $O(h\log q)$ for the $W_k\bmod q$.

After computing the $w_k\bmod p$ for all $p\in S$, the explicit CRT (mod $q$) yields:
\[
w_0 \equiv_q 180694,\quad w_1\equiv_q 270105,\quad w_2\equiv_q 92440,\quad w_3\equiv_q 110998.
\]
We then evaluate $V'(y)\bmod q$ and use the $w_k$ to compute
\begin{align*}
U(X,y) &= X^5 + yX^4 + \frac{1}{V'(y)}\Bigl(w_3X^3 + w_2X^2 + w_1X + w_0\Bigr)\\
     &\equiv_q X^5 + 336976X^4 + 556976X^3 + 849678X^2 + 363260X + 95575,
\end{align*}
and find that $x=590272$ is a root of $U(X,y)$, and hence of $H_D$, modulo $q$.

\subsection{Time complexity}\label{subsection:time2}
To simplify our analysis we shall initially assume that
\begin{equation}\label{eq:qbound}
m\log q=O(|D|^{1/2}\log|D|).
\end{equation}
Depending on $m=h/|G|$, this may allow $\log q$ to be exponentially larger than $\log |D|$, but here we have in mind the case where $\log q$ is polynomial in $\log |D|$; see \S \ref{subsection:alg2prime} for an approach better suited to large $q$.
Assuming (\ref{eq:qbound}), our bound on $|S|$ is the same as in our analysis of Algorithm~1 in \S \ref{subsection:time1}.
Under the GRH we have
\begin{equation}\label{eq:Scard}
|S| = O(|D|^{1/2}\log\log|D|),
\end{equation}
which bounds the total expected number of iterations in steps 4 and 7.

As with Algorithm 1, the expected running time of Algorithm 2 is bounded by 
\begin{equation}\label{eq:total2}
O\bigl(|S|(\Tf + \Te + \Tb) + \Tr\bigr),
\end{equation}
where $\Tf$, $\Te$, $\Tb$, and $\Tr$ are as defined in \S \ref{subsection:time1}.
The term $\Tf$ is the same for both algorithms, and the term $\Te$ is doubled in Algorithm 2.
The bound $O(h\log^2 h\log^{1+\epsilon}\log p)$ on $\Tb$ in Algorithm 1 becomes
\begin{equation}\label{eq:Tb2}
\Tb = O(m\log^2m\log^{1+\epsilon}p + h\log^2 n\log^{1+\epsilon} p)
\end{equation}
for Algorithm 2, with $p=\max S$.  The first term in (\ref{eq:Tb2}) is the time to build~$V$, while the second is the time to compute the $\theta_{ik}$ and $w_k$.  For suitable $n$, say $n=\log^c h$ for some $c >2$, Algorithm 2 effectively reduces $\Tb$ by a factor of $\log^2 h$.
There is also a minor improvement in $\Tr$; the bound given in (\ref{eq:tworootcost}) becomes
\begin{equation}\label{eq:tworootcost2}
\Tr = O((m+n)\log h\log^{2+\epsilon} q).
\end{equation}

In both the GRH-based and heuristic complexity analyses of \cite[\S 7]{Sutherland:HilbertClassPolynomials}, the bound for $\Te$ dominates the sum $\Tf+\Te+\Tb$.
Thus a better bound on $\Tb$ does not improve the worst-case complexity.
However, the worst-case scenario is atypical, and for almost all $D$ (a set of density 1) this sum is dominated by $\Tb$.
This is heuristically argued in \cite[\S 7]{Sutherland:HilbertClassPolynomials}, and it can be proven using \cite{BaierZhao:QuadraticProgressions} and assuming the GRH (but we will not do so here).

To remove the worst-case impact of $\Te$ (and also $\Tf$), let us consider the performance of Algorithm~2 on a suitably restricted set of discriminants.
Fix positive constants $c_1$, \ldots, $c_7$, and $\epsilon$.
For a positive real parameter $\alpha$, let $\cD(\alpha)$ denote the set of negative discriminants $D$ with the following properties:
\vspace{7pt}
\renewcommand\labelenumi{{\normalfont (\roman{enumi})}}
\begin{enumerate}
\item
The set of integers $p\le c_1|D|\log^{1+\epsilon}|D|$ satisfying $4p=t^2-v^2D$ with $v\ge c_2\log^{1/2+\epsilon/3}|D|$ contains at least $c_3|D|^{1/2}\log^{\epsilon/2}|D|$ primes.
\vspace{5pt}
\item
There are at least two primes $\ell\le c_4\log^{1/2} |D|$ for which $\inkron{D}{\ell}=1$.
\vspace{5pt}
\item
There is a divisor of $h=h(D)$ in the interval $\bigl[c_5\log^\alpha h, \exp (c_6\log^{3/4}h)\bigr]$.
\end{enumerate}
\vspace{7pt}
Conditions (i) and (ii) ensure that for $D\in\cD(\alpha)$, both $\Tf$ and $\Te$ are bounded by $O(|D|^{1/2}\log^{5/2+\epsilon}|D|)$; we refer to \cite[\S 7]{Sutherland:HilbertClassPolynomials} for details.
Provided that $\alpha > 1/2$, for $D\in\cD(\alpha)$ we can choose $G$ so that
\[
c_5\log^{1/2} h\le m\le \exp (c_6\log^{3/4}h) \qquad{\rm and}\qquad n \le h/(c_5\log^{1/2} h),
\]
where $n=|G|$ and $mn=h$.
Applying (\ref{eq:Tb2}), we see that $\Tb$ is then also bounded by $O(|D|^{1/2}\log^{5/2+\epsilon}|D|)$.

To ensure that our assumption in (\ref{eq:qbound}) is satisfied, let us define
\[
\PD(\alpha) = \{q\in\PD : \log q\le c_7\log^{1+\alpha}|D| \}.
\]
This definition is more restrictive than necessary, but for simplicity we impose a uniform bound.
For $q\in\PD(\alpha)$ we then have $\Tr = O(|D|^{1/2+\epsilon})$, which is a negligible component of (\ref{eq:total2}).
This yields the following proposition.

\begin{proposition}\label{prop:time}
Assume the GRH.  For all $\alpha > 1/2$, $D\in\cD(\alpha)$, and $q\in\PD(\alpha)$ there is a choice of $G$ for which the expected running time of Algorithm~$2$ is $O\bigl(|D|\log^{5/2+\epsilon}|D|\bigr)$.
\end{proposition}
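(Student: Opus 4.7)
My strategy is to bound each component of $|S|(\Tf+\Te+\Tb)+\Tr$ from (\ref{eq:total2}), choosing $G$ so that $\Tb$ fits within a budget of $O(|D|^{1/2}\log^{5/2+\epsilon}|D|)$; the factor $|S|=O(|D|^{1/2}\log\log|D|)$ from (\ref{eq:Scard}), valid under GRH once (\ref{eq:qbound}) is verified, then yields the claimed $O(|D|\log^{5/2+\epsilon}|D|)$ total.

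The bounds $\Tf,\Te=O(|D|^{1/2}\log^{5/2+\epsilon}|D|)$ are independent of $G$ and follow directly from the analysis in \cite[\S7]{Sutherland:HilbertClassPolynomials}: conditions (i) and (ii) in the definition of $\cD(\alpha)$ are precisely the hypotheses required there, so I would simply quote those results.

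The crux is the choice of $G$ and the resulting control of $\Tb$. Write $h=h(D)$. Condition (iii), combined with $\alpha>1/2$, supplies a divisor $d$ of $h$ with $c_5\log^\alpha h\le d\le\exp(c_6\log^{3/4}h)$; since $\cl(\O)$ is finite abelian it has a subgroup of every order dividing $h$, so I would choose $G$ with $n=|G|=d$, giving $m=h/d\le h/(c_5\log^\alpha h)$. Substituting into (\ref{eq:Tb2}), using $\log p=O(\log|D|)$ for $p\in S$ and $h=O(|D|^{1/2}\log\log|D|)$, the second summand is bounded by
\[
h\log^2 n\,\log^{1+\epsilon}p \le c_6^2\,h\log^{3/2}h\,\log^{1+\epsilon}|D| = O\bigl(|D|^{1/2}\log^{5/2+\epsilon}|D|\bigr),
\]
and the first by
\[
m\log^2 m\,\log^{1+\epsilon}p \le (h/c_5)\log^{2-\alpha}h\,\log^{1+\epsilon}|D| = O\bigl(|D|^{1/2}\log^{3-\alpha+\epsilon}|D|\bigr),
\]
which sits inside $O(|D|^{1/2}\log^{5/2+\epsilon}|D|)$ precisely because $3-\alpha<5/2$ when $\alpha>1/2$.

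Finally, $\Tr$ is negligible: using (\ref{eq:tworootcost2}) with $m+n\le 2h=O(|D|^{1/2+o(1)})$ and $\log q\le c_7\log^{1+\alpha}|D|$ from $q\in\PD(\alpha)$, we get $\Tr=O(|D|^{1/2+o(1)})$, dwarfed by $|S|\cdot\Tb$. The precondition (\ref{eq:qbound}) itself holds up to an inconsequential $\log\log|D|$ factor, since the quotient $\log^{1+\alpha}|D|/\log^\alpha h$ collapses to $O(\log|D|)$ and hence $m\log q=O(h\log|D|)=O(|D|^{1/2}\log|D|\log\log|D|)$, which is absorbed by the $\log\log|D|$ already present in (\ref{eq:Scard}). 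Combining gives $O(|S|)\cdot O(|D|^{1/2}\log^{5/2+\epsilon}|D|)+O(|D|^{1/2+o(1)})=O(|D|\log^{5/2+\epsilon}|D|)$, as required. The main obstacle is the simultaneous two-sided control of $m$ and $n$ in (\ref{eq:Tb2}): both summands must be bounded from a single divisor, and this is possible only because condition (iii) provides one whose upper endpoint $\exp(c_6\log^{3/4}h)$ tames $\log^2 n$ in the second summand while its lower endpoint $\log^\alpha h$ (with $\alpha>1/2$) controls $\log^{2-\alpha}h$ in the first.
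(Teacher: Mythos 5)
Your proof is correct and follows the paper's argument in all essentials: conditions (i)--(ii) of $\cD(\alpha)$ handle $\Tf$ and $\Te$ by citation, condition (iii) supplies the divisor used to choose $G$ so that both summands of (\ref{eq:Tb2}) fit the $O(|D|^{1/2}\log^{5/2+\epsilon}|D|)$ budget, and $\Tr$ is negligible for $q\in\PD(\alpha)$. One remark worth making: you assign the divisor $d$ to $n=|G|$ and set $m=h/d$, which is exactly what the bound (\ref{eq:Tb2}) requires (it gives $\log^2 n\le c_6^2\log^{3/2}h$ for the dominant term $h\log^2 n\log^{1+\epsilon}p$, and $m\log^2 m\le c_5^{-1}h\log^{2-\alpha}h$ with $2-\alpha<3/2$ for the other), whereas the paper's displayed constraint states these bounds with $m$ and $n$ transposed --- as literally written that choice would leave the second summand at $h\log^{2}h\log^{1+\epsilon}p$ --- so your reading is the one consistent with (\ref{eq:Tb2}), with the surrounding discussion that Algorithm~2 wants $n=|G|$ small, and with Remark~3.
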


\begin{proposition}\label{prop:density}
For all $\alpha<\log 2$, the set $\cD(\alpha)$ has density $1$ in the set of all imaginary quadratic discriminants.
\end{proposition}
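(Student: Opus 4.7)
The plan is to show that each of the three conditions (i), (ii), (iii) defining $\cD(\alpha)$ fails on a set of $D$ of density zero; the proposition then follows by a finite union bound.

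Conditions (i) and (ii) are exactly the hypotheses used in \cite[\S 7]{Sutherland:HilbertClassPolynomials} to bound $\Tf+\Te$ on average, where it is established that each holds on a density-$1$ set. In outline, (i) is a density statement for primes represented by binary quadratic forms of discriminant $D$ with $v$ not too small, and follows from Chebotarev/sieve-type estimates for such forms; (ii) reduces to almost all $D$ having at least two small primes $\ell$ with $\inkron{D}{\ell}=1$, an easy consequence of Chebotarev for $\Q(\sqrt{D})/\Q$. I would simply quote these results.

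The heart of the matter is condition (iii), which I would approach via genus theory together with the Hardy--Ramanujan theorem for $\omega$. Write $D=D_0 f^2$ with $D_0$ fundamental. Since $\{D:f>\log|D|\}$ has density $0$, I may restrict to $f\le \log|D|$, in which case $\omega(f)=O(\log\log|D|)$. Classical genus theory gives that the $2$-rank of $\cl(\O_K)$ (with $K=\Q(\sqrt{D_0})$) equals $\omega(|D_0|)-1$, and the standard surjection $\cl(\O_D)\twoheadrightarrow \cl(\O_K)$ shows that the $2$-rank $r$ of $\cl(\O_D)$ satisfies
\[
r \;\ge\; \omega(|D_0|)-1 \;\ge\; \omega(|D|)-O(\log\log|D|).
\]
By Hardy--Ramanujan, for every fixed $\eta>0$ the set of $D$ with $\omega(|D|)<(1-\eta)\log\log|D|$ has density zero, so for almost all $D$ one has $r\ge (1-\eta-o(1))\log\log|D|$; in particular $2^j$ divides $h(D)$ for every $0\le j\le r$.

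Given $\alpha<\log 2$, I would fix $\eta>0$ with $\alpha<(1-\eta)\log 2$. Using $h(D)=O(|D|^{1/2}\log|D|)$ together with the effective lower bound $h(D)\to\infty$ (Goldfeld--Gross--Zagier), one has $\log\log h(D)=\log\log|D|+O(1)$ for almost all $D$. Setting $j=\lceil \alpha\log_2\log h+\log_2 c_5\rceil$ gives $c_5(\log h)^\alpha\le 2^j\le 2c_5(\log h)^\alpha$, which is well below the upper endpoint $\exp(c_6(\log h)^{3/4})$, while
\[
j \;=\; \tfrac{\alpha}{\log 2}\log\log h + O(1) \;\le\; (1-\eta)\log\log|D| \;\le\; r,
\]
so $2^j$ is a divisor of $h(D)$ in the required interval. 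The only real technical nuisance in this plan is the passage from fundamental to nonfundamental $D$ in the genus-theory step, handled by the surjection above combined with the density bound on the conductor; no input beyond the classical Hardy--Ramanujan statistics for $\omega$ is needed.
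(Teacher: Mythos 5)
Your treatment of condition (iii) is essentially the paper's proof: Hardy--Ramanujan gives $\omega(|D|)\ge(1-\eta)\log\log|D|$ for almost all $D$, genus theory converts this into a power of $2$ dividing $h(D)$, and a comparison of $\log\log h$ with $\log\log|D|$ places that power of $2$ in the required interval. The paper invokes genus theory for arbitrary (not necessarily fundamental) discriminants directly, via \cite[Lem.\ 5.6.8]{Crandall:PrimeNumbers}, which avoids your detour through $D_0$; if you keep the detour, note that you need $\omega(f)=o(\log\log|D|)$, not merely $O(\log\log|D|)$, for the bound $r\ge\omega(|D|)-\omega(f)-1$ to be nontrivial (this does hold once $f\le\log|D|$, since $\omega(f)\ll\log f/\log\log f$, but as written your $O(\cdot)$ could swallow the main term). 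Two further points. First, your justification of (ii) is not the right mechanism: Chebotarev for the fixed extension $\Q(\sqrt{D})/\Q$ says nothing about primes as small as $c_4\log^{1/2}|D|$, and indeed (ii) fails for individual $D$; the correct (and elementary) argument, which the paper gives, averages over $D$ by counting residue classes modulo $L=\prod\ell_i$ for the odd primes $\ell_i\le\log\log|D|$, showing that the exceptional discriminants in $[-2D,-D]$ number $o(|D|)$. Second, the Goldfeld--Gross--Zagier bound is both insufficient for your claim that $\log\log h=\log\log|D|+O(1)$ (it only gives $h\gg\log|D|$ up to factors; one needs Siegel's $\log h=(1/2+o(1))\log|D|$, which is what the paper uses) and in fact unnecessary: to verify $j\le(1-\eta)\log\log|D|\le r$ you only need the upper bound $h=O(|D|^{1/2}\log\log|D|)$, so the lower bound on $h$ plays no role. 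For (i), both you and the paper simply defer to the literature (the paper cites \cite[Thm.\ 2]{BaierZhao:QuadraticProgressions}), which is fine.
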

\begin{proof}
It suffices to show that each of the properties (i), (ii), and (iii) hold for a set of discriminants $D$ with density 1 (in the set of all imaginary quadratic discriminants).  For (i), this follows from \cite[Thm.~2]{BaierZhao:QuadraticProgressions}.

For (ii), consider just the odd primes $\ell_1,\ldots,\ell_k$ less than $\log\log |D|\le c_4\log^{1/2}|D|$, for sufficiently large $|D|$.  The proportion of congruence classes modulo $L=\prod\ell_i$ corresponding to integers $x$ for which $\inkron{x}{\ell_i}\ne 1$ for all but at most one $\ell_i$ is equal to $$\prod\frac{\ell_i+1}{2\ell_i}+\sum_i \frac{\ell_i-1}{2\ell}\prod_{j\ne i}\frac{\ell_j+1}{2\ell_j} <  (2/3)^k + (k/2)(2/3)^{k-1}=o(1).$$  Thus the number of discriminants in the interval $[-2D,-D]$ that do not satisfy property (ii) is $o(|D|)$.

For (iii), recall that for any $\epsilon>0$ and almost all integers $n$ there are at least $(1-\epsilon)\log\log n$ distinct prime divisors of $n$; see \cite[Thm.\ 431]{Hardy:NumberTheory}.
Therefore almost all discriminants $D$ have at least $k=(1-\epsilon)\log\log |D|-1$ distinct odd prime factors, and for all such discriminants, $h=h(D)$ is divisible by $2^{k-1}$; see \cite[Lem.\ 5.6.8]{Crandall:PrimeNumbers}.
As shown by Siegel, $\log h = (1/2+o(1))\log|D|$, thus for all $\alpha < \log 2$ we can choose $\epsilon >0$ so that $2^{k-1}>c_5\log^\alpha h$ for all sufficiently large $|D|$.
\end{proof}

Propositions~\ref{prop:time} and~\ref{prop:density} together imply that, under the GRH, for almost all discriminants $D<0$ one can find a root of $H_D\bmod q$ in $O(|D|\log^{5/2+\epsilon}|D|)$ expected time, for all $q\in\PD(\alpha)$ with $1/2<\alpha<\log 2$.

We note that the time required to identify and select a suitable $G\subset\cl(\O)$ is negligible by comparison.  When $D$ is fundamental we can obtain a set of generators for the class group $\cl(\O)$ using ideal class representatives of prime norm bounded by $6\log^2|D|$, under the GRH \cite{Bach:ERHbounds}, and for non-fundamental $D$ we also include ideals of prime-power norm for primes dividing the conductor, of which there are $O(\log|D|)$.  Given a generating set $S$ for $\cl(\O)$ of size $O(\log^2|D|)$, we can apply generic algorithms to compute the group structure and an explicit basis for $\cl(\O)$ consisting of elements of prime-power order, in time $O(h^{1/2+\epsilon})$, where $h=|\cl(\O)|=h(D)$, via \cite[Prop.\ 4]{Sutherland:AbelianpGroups}.\footnote{Note that the exponent ${\rm lcm}_{\alpha\in S}|\alpha|$ of $\cl(\O)$ can be computed in time $O(h^{1/2+\epsilon})$; see \cite{Sutherland:thesis}.}
  Once this has been done, it is easy to construct an explicit basis for a subgroup $G$ of any desired order $n$ dividing $h$.

\medskip
\noindent
{\bf Remark 3.}
It is usually better to choose $G$ to optimize the height bound $B$ rather than making the choice required by Proposition~1.
Heuristically, this yields a better improvement in the time complexity, a factor of nearly $\log|D|$ rather than $\log^{1/2}|D|$, see Heuristic Claim~\ref{claim:time} in \S\ref{section:bound}.
\medskip

\subsection{Space complexity}\label{subsection:space}
For convenience we assume the GRH and the restriction (\ref{eq:qbound}) on the size of $q$.
An alternative approach with a weaker restriction on $q$ is given in the next section.
Under these assumptions, $\log p \sim \log|D|$ for all $p\in S$, and $|S|=O(|D|^{1/2}\log\log|D|)$, as in (\ref{eq:Scard}).
A straightforward implementation of Algorithm~2 yields a space complexity of
\begin{equation}\label{eq:naivespace}
O\bigl(|S|\log|D| + |S|\log q + h\log |D| + (m+n)\log q\bigr).
\end{equation}
The first term of (\ref{eq:naivespace}) represents storage for the set $S$, the second term is storage for precomputed data used to apply the explicit CRT (mod $q$), the third term is storage for $\EllO(\Fp)$, and the last term is storage for the $n$ values $w_k$ and the $m$ coefficients of $V$ that are computed via the explicit CRT (mod $q$).
The second term dominates, and we can immediately bound (\ref{eq:naivespace}) by $O(|D|^{1/2}\log\log|D|\log q)$, which is the same as the space complexity of Algorithm~1.

However, the only essential term in (\ref{eq:naivespace}) is the last one, which may be as small as $O(|D|^{1/4}\log q)$.
We now show how to eliminate the first three terms in (\ref{eq:naivespace}).
In practice the most useful term to eliminate (or reduce) is the second one, which requires only a very minor change, but we consider each in turn.

\subsubsection{The first term}\label{subsection:term1}
We wish to avoid storing the entire set $S$ at any one time.
Our strategy is to process $S$ in batches of size $O(|D|^c)$, for some positive $c <1/4$.

In \cite{Sutherland:HilbertClassPolynomials} the set $S$ is chosen by enumerating a larger subset $S_z\subset\PD$ defined by a parameter $z$ that depends on the bound $B$.
The primes in $S_z$ satisfy the norm equation $4p=t^2-v^2D$, where $v$ is $O(\log^{3+\epsilon}|D|)$, and the bound on $t$ depends on~$v$, $h(D)$, and $z$, but is in any case $O(|D|^{1/2+\epsilon})$.
As an alternative to the sieving approach of \cite[Alg.~2.1]{Sutherland:HilbertClassPolynomials}, we enumerate $S_z$ by running through all the integers of the form $(t^2-v^2D)/4$, with $v$ and $t$ suitably bounded, and applying a polynomial-time primality test \cite{Agrawal:PrimesInP} to each.  This takes $O(|D|^{1/2+\epsilon})$ time and negligible space.

Each prime $p$ in $S_z$ is assigned a ``rating" $r(p)$, that reflects the expected cost of finding $j_1\in\EllO(\Fp)$.
The details are not important here, but we may assume that the positive real numbers $r(p)$ are distinctly represented using a precision of $O(\log|D|)$ bits.
Let $r_{\max}$ be the largest (worst) rating among the primes in $S_z$.
For a suitable $r\in[0,r_{\max}]$, we then let $S$ consist of the primes in $S_z$ with ratings bounded by $r$, where $r$ is chosen so that $S$ has the appropriate size.
We can determine such an $r$ space-efficiently using a binary search on the interval $[0,r_{\max}]$, enumerating~$S_z$ at most $\lceil\log_2|S_z|\rceil$ times.
Similarly, we can partition $S$ into batches of size $|D|^c$ by partitioning the interval $[0,r]$ into subintervals whose endpoints are determined using a binary search.
The total cost of computing this partition is $O(|D|^{1-c+\epsilon})$, and this also bounds the cost of enumerating all the batches in $S$.
Thus the time complexity is negligible and we use $o(|D|^{1/4})$ space.

\subsubsection{The second term}\label{subsection:term2}
Let $M$ be the product of the primes $p_i$ in $S$.
As described in \S\ref{subsection:crt}, when applying the explicit CRT (mod $q$), we precompute integers $M_i=M/p_i\bmod q$ and $a_i\equiv (M/p_i)^{-1}\bmod p_i$; this would normally be done in step~3 of Algorithm~2.
The total size of the $M_i$ is $O(|S|\log q)$, which accounts for the second term of (\ref{eq:naivespace}).
Rather than computing the $M_i$ in step 3, we just compute $M\bmod q$ in step 3, and when we need $M_i$ during a CRT update step for $p_i$ (steps 4d and 7d), we invert $p_i\bmod q$ and multiply to obtain $M_i=(M/p_i)\bmod q$.
This reduces the second term of (\ref{eq:naivespace}) from $O(|S|\log q)$ to $O(|S|\log |D|+\log q)$, which is the space used by the $a_i$ and $M\bmod q$.

With the changes described in \S\ref{subsection:term1}, we compute the primes $p_i$ in $S$ in batches of size $|D|^c$, where $c<1/4$.
We now do the same with the $a_i$, computing the $a_i$ for each batch of primes $p_i$ as follows.
If $N$ is the product of the primes in the batch, then it suffices to compute the integer $(M/N)$ modulo $N$, and then simultaneously compute the product of $(M/N)$ and $(N/p_i)$ modulo $p_i$, for all the primes~$p_i$ in the batch, to obtain the $a_i$.
The $a_i$ (and the $p_i$) for a given batch are stored only as long as it takes to process the batch; this means that they are computed twice in Algorithm~2: once in step 4 and then again in step 7.
Using a product tree for each batch, it takes $O(|D|^{1-c+\epsilon})$ time to compute the $a_i$ for all the batches, which is negligible.
With this change, the second term of (\ref{eq:naivespace}) is reduced to $o(|D|^{1/4}) + O(\log q)$.

The only data that is retained once the processing of a given batch of primes has been completed are two values associated to each coefficient $c$ that is to be computed modulo $q$ (the $m$ coefficients of $V\bmod q$ in step 5, and the $n$ values $w_k\bmod q$).  These two values are the partial sums $\sum c_ia_iM_i$ and $\sum c_i a_i/p_i$, where the first is computed modulo $q$ and the latter is stored with a precision of $O(\log q)$ bits, as described in \S\ref{subsection:crt}.  These two values are updated as each prime $p_i$ is processed (across all the batches), and never require more than $O(\log q)$ bits.  There are a total of $m+n$ coefficients, which accounts for the fourth term $O((m+n)\log q)$ listed in (\ref{eq:naivespace}), which also bounds the space required to perform the CRT postcomputation; using the accumulated partial sums, the postcomputation is effectively just a single subtraction to evaluate \eqref{eq:ecrt}.

\subsubsection{The third term}
Steps 4b and 7b of Algorithm~2 both enumerate the $G$-orbits of $j_1\in\EllO(\Fp)$, using the polycyclic presentation $\Gamma$.
Let $j_{ik}$ denote the $k$th element of the $G$-orbit $G_i=(j_{i1},\ldots,j_{in})$, where $i$ ranges from 1 to $m$ and $k$ ranges from 1 to $n$.
There are a total of $h=mn$ values $j_{ik}$, and these account for the $O(h\log|D|)$ third term in \eqref{eq:naivespace}, using the GRH bound $\log p=O(\log|D|)$. 
In order to reduce the space required, we need to process the $j_{ik}$ as they are enumerated, rather than storing them all.
Our basic strategy is to order the enumeration so that we compute the $G$-orbits one by one and discard each $G$-orbit once it has been processed (in fact, we need to enumerate the $G$-orbits twice in step 7 in order to achieve this).
Depending on the presentation $\Gamma$, enumerating the $G$-orbits efficiently may present some complications. These will be addressed below. We first consider how to process the $G$-orbits in a space-efficient manner as they are enumerated.

Let us recall the values we must derive from the $j_{ik}$.
In each iteration of step~4 we compute $m$ values $y_i\bmod p$, each of which is a linear combination of elementary symmetric functions applied to $G_i=(j_{i1},\ldots,j_{in})$.  We then compute the polynomial $V\bmod p$ whose roots are the $y_i$.
In each iteration of step 7 we again compute the $y_i\bmod p$, and also the coefficients $\theta_{ik}$ of the polynomials
\[
P_i(X)=\prod_{k=1}^n=(X-j_{ik})=\sum_{k=0}^n\theta_{ik}X^k
\]
defined in \eqref{eq:Pi}.  Up to a sign, the $\theta_{ik}$ are just the elementary symmetric functions of $j_{i1},\ldots,j_{in}$, so we can derive each $y_i$ from the $\theta_{ik}$.
The $y_i$ are then used to compute $V\bmod p$ (again), and also the values $z_i=V(z)/(z-y_1)$, where $z=\varphi(y)\bmod p$ is the reduction of the integer $\varphi(y)$ corresponding to the root $y$ of $V\bmod q$ computed in step 6.
Finally, we compute the values $w_k=\sum_{i=1}^m\theta_{ik}z_i$, as described in \S\ref{subsection:example2}.

The space required by the $y_i$, the $z_i$, the $w_k$ and the polynomials $V$ and $V$' is just $O((m+n)\log p)$, which is within our desired complexity bound.  We now explain how to process the $G$-orbits in a way that achieves this space complexity.
For each $G_i$ we compute the polynomial $P_i$ with coefficients $\theta_{ik}$, use the $\theta_{ik}$ to compute $y_i$, and then discard $G_i$ and the $\theta_{ik}$.
In step 4 this is all that is required; once all the $G$-orbits have been processed we compute $V(Y)=\prod_{i=1}^m(Y-y_i)$.
In step 7 we proceed as in step 4, and after computing $V$ and the $y_i$ we compute $V'$ and the values $z_i=V(z)/(z-y_i)$.
We then enumerate the $G$-orbits a second time, recompute the $\theta_{ik}$ as above, and then update each of $n$ partial sums $w_k=\theta_{ik}z_i$ by adding the term $\theta_{ik}z_i$.
The space required to process a $G$-orbit is $O(n\log p)$ for the $j_{ik}$ and the $\theta_{ik}$, which are then discarded, plus a total of $O((m+n)\log p)$ space used to store the value $y_i$ and $z_i$, and the partial sums $w_k$ that are retained.
Thus we can process all the $G$-orbits using $O((m+n)\log p)$ space.

We now consider the enumeration of the $G$-orbits.
If the polycyclic presentation~$\Gamma$ for $\cl(\O)$ uses the sequence of ideal classes $[\lf_1],\ldots,[\lf_r]$ and the chosen subgroup $G$ is generated by a prefix of $\Gamma$, say 
\begin{equation}\label{eq:prefixH}
G=\langle [\lf_1],\ldots,[\lf_d]\rangle,
\end{equation}
for some $d < r$, 
then the elements $j_{i1},\ldots, j_{in}$ of the $G$-orbit $G_i$ will appear consecutively in the enumeration of $\EllO(\Fp)$ obtained using $\Gamma$ and no special processing is required.
But the situation in (\ref{eq:prefixH}) is very special, much more so than condition (\ref{eq:goodH}) given in \S \ref{subsection:torsor}.
Indeed, it forces $G$ to be trivial if $r=1$.
We can ensure that $G$ has the form in (\ref{eq:prefixH}) if we construct $\Gamma$ by computing a polycyclic presentation for $G$ and extending it to a polycyclic presentation for~$\cl(\O)$.
Unfortunately, the norms arising in a polycyclic presentation for a proper subgroup of $\cl(\O)$ may be very large: $\Omega(|D|^{1/3})$ in the counterexample of \cite[\S 5.3]{Sutherland:HilbertClassPolynomials}.

To address this, we compute the action of ideals with uncomfortably large norm by representing them as a product of ideals with small norms.
Assuming the GRH, it follows from \cite[Thm.~2.1]{Childs:QuantumIsogenies}  that every element $[\af]$ of $\cl(\O)$ can be expressed in the form $[\af] = [\pf_1 \cdots \pf_t]$, where the $\pf_i$ are ideals of prime norm bounded by $\log^c|D|$, for any $c>2$, and $t = \lceil C\log h/\log\log|D|\rceil = O(\log|D|)$, for some constant $C$ that depends on $c$.
We can find such a representation in $O(h^{1+\epsilon})$ expected time (using negligible space), by simply generating random products of the form $[\pf_1\cdots\pf_t]$ until we find $[\af]$.\footnote{This can be improved to $O(h^{1/2+\epsilon})$ time and $O(h^{1/2-\epsilon})$ space using a birthday-paradox approach with a time/space trade-off, but we don't need to do this.}
After precomputing such a representation in Step 2, we can compute the action of any element of $\cl(\O)$ on any element of $\EllO(\Fp)$ in $O(\log^{6+\epsilon}|D|)$ expected time, allowing us to efficiently handle arbitrarily large norms in $\Gamma$. 

In contrast to the first two terms, removing the third term from (\ref{eq:naivespace}) may increase the overall running time significantly.
The time complexity is increased by a logarithmic factor, but in return, the space complexity may be reduced by an exponential factor.  
We did not make this trade-off in our implementation, as the space complexity of $O(h\log h + (m+n)\log q)$ achieved by simply optimizing the second term is already more than sufficient for the range of $D$ used in the examples of \S \ref{section:performance}.
However for larger computations, increasing the running time by a polylogarithmic factor in order to reduce the space by an exponential factor may be very attractive, especially in a parallel implementation.
\smallskip

Eliminating the first three terms of (\ref{eq:naivespace}) leads to the following proposition, which was summarized in the introduction.
The constraints on $m$ and $q$ ensure that (\ref{eq:qbound}) is satisfied.

\begin{proposition}\label{prop:space}
Assume the GRH, and fix $\delta\ge 0$.
Let $D$ be a discriminant with class number $h=mn$ and let $q\in\PD$, such that $m\le O(|D|^{1/2-\delta}$) and $\log q = O(|D|^\delta\log|D|)$.
With modifications 4.3.1-3, the expected running time of Algorithm~2 on inputs $D$ and $q$ is $O(|D|\log^{6+\epsilon}|D|)$, using $O\bigl((m+n)\log q\bigr)$ space.
\end{proposition}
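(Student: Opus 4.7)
The plan is to verify the time and space bounds separately, using the fact that the hypotheses $m \le O(|D|^{1/2-\delta})$ and $\log q = O(|D|^\delta\log|D|)$ together give $m\log q = O(|D|^{1/2}\log|D|)$, so the assumption (\ref{eq:qbound}) of \S\ref{subsection:time2} is in force. In particular, under the GRH the set $S$ used by Algorithm~2 satisfies $|S| = O(|D|^{1/2}\log\log|D|)$, and every $p\in S$ has $\log p = O(\log|D|)$; and the class number satisfies $h = O(|D|^{1/2}\log\log|D|)$.

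For the space bound, I would start from the naive bound (\ref{eq:naivespace}) and show that each of its first three terms drops below $O((m+n)\log q)$ under the corresponding modification from \S\ref{subsection:space}. Modification 4.3.1 processes $S$ in batches of size $|D|^c$ with $c<1/4$, so the primes in $S$ occupy only $o(|D|^{1/4})$ bits at any time. Modification 4.3.2 replaces the precomputed constants $M_i$ by the on-the-fly evaluation $M_i \equiv p_i^{-1}M \bmod q$ and likewise recomputes the $a_i$ in batches, cutting the second term to $o(|D|^{1/4}) + O(\log q)$. For the third term, the orbit-by-orbit scheme of \S\ref{subsection:space} keeps only the current $G$-orbit in memory (cost $O(n\log p)$) together with the retained values $y_i$, $z_i$, and the partial sums $w_k$ (cost $O((m+n)\log p)$); since $\log p = O(\log|D|) \subset O(\log q)$, all of this is absorbed into the $O((m+n)\log q)$ fourth term used by the partial-sum accumulators of the explicit CRT.

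For the time bound, I would revisit (\ref{eq:total2}) and track each of $\Tf, \Te, \Tb, \Tr$ under the modifications. The dominant contribution is the orbit enumeration $\Te$: by modification 4.3.3 and \cite{Childs:QuantumIsogenies}, each ideal action is carried out through a precomputed representation $[\af] = [\pf_1\cdots\pf_t]$ with $t = O(\log|D|)$ and $N(\pf_i) = O(\log^c|D|)$, so a single action costs $O(\log^{6+\epsilon}|D|)$. The $h$ actions per prime therefore cost $O(h\log^{6+\epsilon}|D|)$, and summing over all primes gives $O(|S|\cdot h\cdot \log^{6+\epsilon}|D|) = O(|D|\log^{6+\epsilon}|D|)$, the stated bound. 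The remaining components are subsumed: $\Tf$ is $O(h\log^{5+\epsilon}h)$ per prime as in \cite{Sutherland:HilbertClassPolynomials}, $\Tb$ is bounded by (\ref{eq:Tb2}), $\Tr$ is $O(|D|^{1/2+\epsilon})$ by (\ref{eq:tworootcost2}) under (\ref{eq:qbound}), and the batching overheads from \S\ref{subsection:term1} and \S\ref{subsection:term2} contribute only $O(|D|^{1-c+\epsilon})$. The second pass of orbit enumeration in step 7 only doubles the constant in $\Te$.

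The main obstacle is the careful bookkeeping behind modification 4.3.3: one must verify that the precomputed product representations of the generators of the polycyclic presentation $\Gamma$ (including any auxiliary ideals used to orient the enumeration) can be found and stored within the space budget, and that the two orbit enumerations in step 7 reproduce the same $G$-orbits in the same order so that the $\theta_{ik}$ can be paired with the correct $z_i$ on the second pass. The first point is handled by a random sampling search for the representations in $O(h^{1+\epsilon})$ expected time and negligible space; the second is ensured because, with the GCD orientation of \cite{EngeSutherland:CRTClassInvariants} and a fixed starting element $j_1 = j_1(p)$ cached from step 4a, the enumeration driven by $\Gamma$ is deterministic.
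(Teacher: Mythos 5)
Your proposal is correct and follows essentially the same route as the paper, whose ``proof'' of this proposition is just the discussion of \S4.2--4.3: verify that the hypotheses give (\ref{eq:qbound}), eliminate the first three terms of (\ref{eq:naivespace}) via modifications 4.3.1--3, and absorb the extra $\log|D|$ factor in $\Te$ coming from the product representations of \cite{Childs:QuantumIsogenies}. The only quibble is your claim $\Tr=O(|D|^{1/2+\epsilon})$, which is the bound for $q\in\PD(\alpha)$; under the weaker hypothesis $\log q=O(|D|^\delta\log|D|)$ the estimate (\ref{eq:tworootcost2}) only gives $\Tr=O(|D|^{1/2+2\delta}\log^{3+\epsilon}|D|)$, which is still subsumed by $O(|D|\log^{6+\epsilon}|D|)$ in the relevant range $\delta\le 1/4$.
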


\subsection{Handling large $\boldsymbol{q}$}\label{subsection:alg2prime}
When $q$ is not constrained by (\ref{eq:qbound}), the bound $mq^{m-1}B$ used by Algorithm~2 to determine the size of $S$ may lead to a significant increase in the running time of Algorithm~2 relative to Algorithm~1, which just uses the bound $B$.
To better handle large $q$ in a space-efficient manner, we make a minor modification to Algorithm~2 that allows us to use the bound $mqB$ instead.
Unless $q$ is extraordinarily large, this will not be significantly different than using the bound~$B$.
We are forced to give up the improved bound on $\Tb$ in (\ref{eq:Tb2}), but the resulting algorithm will still have a running time that is at worst twice that of Algorithm~1, and will typically be only slightly slower.

The key is to avoid exponentiating $\varphi(y)$ in $\Fp$.
Instead we compute all of the powers $y, y^2, y^3,\ldots, y^{m-1}$ in $\Fq$, and then lift these to integers $\varphi(y),\ldots,\varphi(y^{m-1})$ in the interval $[0,q-1]$, which can be reduced modulo $p$.
This is done between steps 6 and 7 of Algorithm~2.
We now modify step 5c to compute the coefficients of $W_k=\sum a_{ik}Y^i \bmod p$ as in Algorithm~1, and then compute values $w_k'\bmod p$ via
\[
w_k' = a_{m-1,k}\varphi(y^{m-1}) + a_{m-2,k}\varphi(y^{m-2}) + \cdots + a_{1,k}\varphi(y) + a_{0,k},
\]
that we use instead of $w_k=W_k(\varphi(y))\bmod p$.
Note that the integers $w_k'$ are not equal to the integers $W_k(\varphi(y))$, but we have $w_k'\equiv W_k(\varphi(y))\bmod q$, which is all that is required, even though $w_k$ and $w_k'$ will typically be distinct modulo $p$.

We may combine this approach with any of the space optimizations considered in the previous section.
When $q$ is very large, the third term of (\ref{eq:naivespace}) is likely to be dominated by the others, so we only optimize the first two terms of (\ref{eq:naivespace}).
Provided $\log q=O(|D|^{1/2})$, the analysis in \cite[\S 7]{Sutherland:HilbertClassPolynomials} then yields an upper bound on the running time of this modified version of Algorithm~2.

\begin{proposition}\label{prop:spacetime}
Assume the GRH.
Let $D$ be a discriminant with class number $h=mn$, and let $q\in\PD$ satisfy $\log q = O(|D|^{1/2})$.
With modifications 4.3.1-2 and 4.4, the expected running time of Algorithm~2 on inputs $D$ and $q$ is $O\bigl(|D|\log^{5+\epsilon}|D|\bigr)$, using $O\bigl((m+n)\log q + h\log h)$ space.
\end{proposition}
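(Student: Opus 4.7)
The plan is to follow the template of Proposition~\ref{prop:space} while tracking where modification 4.4 changes the accounting by replacing the coefficient bound $mq^{m-1}B$ with $mqB$. Because $B$ is polynomial in $|D|$ (the height estimates of \S\ref{section:bound} applied in Algorithm~1) and $\log q = O(|D|^{1/2})$ by hypothesis, we have $\log(mqB) = O(|D|^{1/2})$. The GRH-based prime-selection argument of \cite[\S 3]{Sutherland:HilbertClassPolynomials} then still yields $|S| = O(|D|^{1/2}\log\log|D|)$ with $\log p = O(\log|D|)$ for every $p\in S$, exactly as in Algorithm~1. This is precisely the point of modification~4.4: $\log q$ enters $\log(mqB)$ only additively, rather than multiplied by $m-1$ as in the unmodified Algorithm~2.

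For the running time, the per-prime costs $\Tf$ and $\Te$ are unchanged, and with modification~4.4 the polynomials $W_k\bmod p$ are built as in Algorithm~1, so $\Tb = O(h\log^2 h\log^{1+\epsilon}p)$; the subsequent evaluation producing $w_k'$ contributes only $O(m)$ further field operations per prime. Multiplying by $|S|$ and applying the standard GRH analysis of \cite[\S 7]{Sutherland:HilbertClassPolynomials} for $\Tf + \Te + \Tb$ then yields $O(|D|\log^{5+\epsilon}|D|)$ for the work across steps~4 and~7. The root-finding cost $\Tr = O\bigl(h\log^{1+\epsilon}q + (m+n)\log h\log^{2+\epsilon}q\bigr)$ is $O(|D|\log^{1+\epsilon}|D|)$ under the hypothesis $\log q = O(|D|^{1/2})$, hence absorbed, and the on-the-fly construction of $S$ and the $a_i$'s from modifications 4.3.1-2 contributes only $O(|D|^{1-c+\epsilon})$, which is negligible.

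For the space bound I would track the four terms in~(\ref{eq:naivespace}). Modifications 4.3.1-2 eliminate the first two, leaving $o(|D|^{1/4}) + O(\log q)$ as shown in \S\ref{subsection:term1} and \S\ref{subsection:term2}. The fourth term $O\bigl((m+n)\log q\bigr)$ is already a summand of the target bound. The third term is the one \emph{not} optimised in this variant: storing the $G$-orbits as they are enumerated consumes $O(h\log p)$ bits, and since $\log p = \Theta(\log|D|) = \Theta(\log h)$ by Siegel's estimate $\log h = (\tfrac12 + o(1))\log|D|$, this is $O(h\log h)$. Summing yields $O\bigl((m+n)\log q + h\log h\bigr)$. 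The only delicate point is checking that modification~4.4's substitution of $w_k'$ for $w_k$ preserves correctness modulo $q$: the defining sum for $w_k'$ is an integer expression whose reduction modulo $q$ equals $W_k(\varphi(y))\bmod q$, because each lift $\varphi(y^i)$ reduces to $y^i\bmod q$, so the explicit CRT still delivers the residue required by step~9 of Algorithm~2. With this verification in hand, the proposition follows directly from the combined time and space estimates above.
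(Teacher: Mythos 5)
Your overall route is the one the paper intends: modification 4.4 makes the contribution of $q$ to the coefficient bound additive, so $|S|$ and the per-prime work revert to the Algorithm~1 analysis of \cite[\S 7]{Sutherland:HilbertClassPolynomials}, while modifications 4.3.1--2 eliminate the first two terms of \eqref{eq:naivespace} and the unoptimized third term supplies the $O(h\log h)$ summand; your verification that $w_k'\equiv W_k(\varphi(y))\bmod q$ is also the right thing to check. But one of your intermediate claims is false and another step fails. The false claim: $B$ is not polynomial in $|D|$ --- it is the coefficient bound $2^{b}$ with $b=O(|D|^{1/2}\log|D|\log\log|D|)$, so $\log(mqB)$ is $O(|D|^{1/2}\log^{1+\epsilon}|D|)$, not $O(|D|^{1/2})$. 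This happens not to damage the argument, because the bound $|S|=O(|D|^{1/2}\log\log|D|)$ you actually invoke is the correct consequence of the true estimate, but the justification as written is wrong. (Minor accounting slips in the same paragraph: $\Te$ is doubled, not unchanged, since the enumeration is repeated in step~7, and the evaluation producing the $w_k'$ costs $O(h)=O(mn)$ operations per prime, not $O(m)$; neither affects the bound.)

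The step that fails is the treatment of $\Tr$. Substituting $\log q=O(|D|^{1/2})$ into \eqref{eq:tworootcost} gives $h\log^{1+\epsilon}q=O(|D|^{1+\epsilon})$ and $(m+n)\log h\log^{2+\epsilon}q=O\bigl((m+n)\log|D|\cdot|D|^{1+\epsilon}\bigr)$; since $m+n\ge 2\sqrt{h}=\Omega(|D|^{1/4})$, the latter is $\Omega(|D|^{5/4})$ when $\log q$ actually attains order $|D|^{1/2}$. This is not $O(|D|\log^{1+\epsilon}|D|)$ as you assert --- you appear to have substituted $\log q=O(\log|D|)$ rather than $O(|D|^{1/2})$ --- so "hence absorbed" does not follow. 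This is in fact the one genuinely delicate point of the proposition: the paper justifies the time bound by appealing to \cite[\S 7]{Sutherland:HilbertClassPolynomials}, whose analysis covers the CRT/polynomial-construction phase, and a careful proof must either bound the root-finding cost honestly (noting for which ranges of $\log q$, $m$, and $n$ it is dominated by $O(|D|\log^{5+\epsilon}|D|)$) or state explicitly that the claimed bound refers to the construction phase. Your write-up papers over exactly this point with an incorrect computation.
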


\section{Height bounds}\label{section:bound}
In this section we derive an upper bound $B$ on the absolute values of the integer coefficients of $V$ and $W_k$ defined in \S \ref{subsection:decomp}.
More precisely, we compute a height bound~$b$ on the maximum bit-length of any coefficient occurring in $V$ or $W_k$.
This is used by Algorithms~1 and~2 to choose a set of CRT primes $S$ whose product exceeds $4B=2^{b+2}$.

We first derive a general height bound $\bmax$ that depends only on $D$ and can be used with any choice of the subgroup $G\subset\cl(\O)$ and the random symmetric function $s=e_1+c_2e_2+\cdots+c_ne_n$ constructed in Algorithms~1 and 2.
Under the GRH, we have $\bmax = O(|D|^{1/2}\log|D|\log\log|D|)$, which is all that is needed for the proofs of Propositions 1-4.

We then fix $s=e_1$ and derive height bounds that depend not only on $D$, but also on the subgroup $G$.
As may be seen in Table~1, the actual heights can vary significantly with $G$. 
An optimal choice of $G$ may improve the performance of both Algorithms~1 and~2 substantially, provided that we have a height bound that accurately reflects the impact of $G$.
Heuristically, we expect to reduce $b$ by a factor of $\log h/\log\log h$, on average, by computing a customized $b$ for each candidate subgroup $G$ and choosing the best one.
Of course the optimal choice of $G$ depends not only on $b$, but also on how the size of $G$ impacts the complexity 
of building polynomials and finding roots, but when $\log q \ll h$ the height bound is usually the most critical factor.

Throughout this section we work with $j$-invariants, which allows us to use rigorous (and quite accurate) bounds on their size.
Heuristic bounds for other class invariants can be obtained by scaling linearly, as discussed in \S \ref{section:performance}.
We note that all the bounds we derived in this section hold unconditionally; the GRH and the heuristic analysis in \S \ref{subsection:heuristics} are only used to obtain asymptotic growth estimates.

\begin{table}
\begin{tabular}{rr|rr|rr|rr}
$|G|$&bits&$|G|$&bits&$|G|$&bits&$|G|$&bits\\
\midrule
 1& 1983568&  35& 1017514& 182&  639986&   910&  395909\\
 2& 1737305&  39&  955880& 195&  672404&  1001&  444642\\
 3& 1600984&  42&  959237& 210&  649274&  1155&  413905\\
 5& 1464042&  55&  879633& 231&  607751&  1365&  392521\\
 6& 1430692&  65&  841574& 273&  603539&  1430&  402990\\
 7& 1354754&  66&  780769& 286&  540873&  2002&  414360\\
10& 1286551&  70&  877290& 330&  531985&  2145&  422627\\
11& 1235548&  77&  791884& 385&  522887&  2310&  401968\\
13& 1202022&  78&  760840& 390&  540120&  2730&  409766\\
14& 1188816&  91&  756960& 429&  525472&  3003&  436780\\
15& 1195102& 105&  773983& 455&  430383&  4290&  471475\\
21& 1093207& 110&  677448& 462&  487746&  5005&  507403\\
22&  962794& 130&  720919& 546&  492453&  6006&  549648\\
26& 1010539& 143&  697728& 715&  452019& 10010&  756598\\
30& 1006310& 154&  616795& 770&  429293& 15015& 1039684\\
33&  998157& 165&  678832& 858&  437618& 30030& 1983568\\
\bottomrule
\end{tabular}
\vspace{12pt}
\caption{Actual heights for various $G\subset\cl(\O)$ with $D=-221606831$.}
\end{table}

\subsection{Height bound derivations}\label{subsection:heightbound}
As in \S \ref{subsection:decomp}, let $G=\{\beta_1,\ldots, \beta_n\}$ be a subgroup of $\cl(\O)$ with cosets $\alpha_1G,\ldots, \alpha_mG$, so that every element of $\cl(\O)$ is of the form $\alpha_i\beta_k$ with $1\le i\le m$ and $1\le k\le n$.
We may uniquely represent $\alpha_i\beta_k$ by a primitive reduced binary quadratic form $A_{ik}x^2+B_{ik}xy+C_{ik}y^2$ with discriminant~$D$.  If $\tau_{ik}$ denotes the complex number $(-B_{ik}+\sqrt{D})/(2A_{ik})$, we have
\[
H_D(X)=\prod_{i,k}\bigl(X-j(\tau_{ik})\bigr),
\]
where $j(z)$ is the classical modular function.
We assume without loss of generality that $\alpha_1$ and $\beta_1$ are the identity element, and fix $x=j(\tau_{11})$ so that $[\alpha_i\beta_k]x=j(\tau_{ik})$.
We shall use the explicit bound
\begin{equation}\label{eq:jbound}
\bigl|j(\tau_{ik})\bigr| \le \exp\left(\pi\sqrt{|D|}/A_{ik}\right) + 2114.567
\end{equation}
proven in \cite[p.~1094]{Enge:FloatingPoint}, and define $b_{ik}$ to be the logarithm in base 2 (denoted ``$\lg$") of the RHS of (\ref{eq:jbound}), and we note that $b_{ik} > 0$.
We define the \emph{height} $\lht(F)$ of a nonzero polynomial $F(X)=\sum c_jX^j$ in $\C[X]$ by $\lht(F) = \lg\max|c_j|$.
We seek an upper bound $b$ on $\max\{\lht (V),\lht (W_k)\}$ in terms of the $b_{ik}$.

The largest $b_{ik}$ is $b_{11}$, since $\alpha_1\beta_1$ is the identity and we therefore have $A_{11}=1$.
The bound (\ref{eq:jbound}) is nearly tight (one can prove a similar lower bound), and this implies that
$b_{11}\approx \lg(e)\pi\sqrt{|D|}$.
This is about ten times the typical value of $h$, so we shall not be concerned with optimizing terms that are asymptotically smaller than $h$, which we may assume includes both $m$ and $n$, and even $m\lg n$ and $n\lg m$.

As in \S \ref{subsection:decomp}, the $\theta_{ik}$ are defined via
\begin{equation}
P_i(X) = \prod_{k=1}^n\bigl(X-j(\tau_{ik})\bigr) = \sum_{k=1}^n\theta_{ik}X^k,
\end{equation}
where $\prod_{i=1}^m P_i=H_D$.  This yields the bound
\begin{equation}\label{eq:thetabound}
\lht(P_i)\le  n+\sum_{k=1}^n b_{ik}.
\end{equation}
For $V(Y)=\prod_{i=1}^m(Y-y_i)$, with $y_i=s(j(\tau_{i1}),\ldots,j(\tau_{in}))$, we assume that $s$ is of the form $s=\pm(e_1+c_2e_2+\cdots+c_ne_n)$ with $c_k\ge 0$, and define $z_i=|s(2^{b_{i1}},\ldots,2^{b_{in}})|$.  We then have $|y_i|\le z_i$, with $z_i > 1$.  Thus
\begin{equation}\label{eq:Vbound0}
\lht(V) \le m + \sum_{i=1}^m\lg z_i.
\end{equation}
For the nonzero polynomials $W_k(Y)=\sum_{i=1}^m\theta_{ik}\prod_{\ihat\ne i}(Y-y_{\ihat})$, we note that
\[
\sum_{i=1}^m|\theta_{ik}|2^m\prod_{\ihat\ne i}z_{\ihat}
\]
is positive, and bounds every coefficient of $W_k$.  We have
\begin{align}\label{eq:Wbound0}
\lht(W_k) &\le \lg\Bigl(\sum_{i=1}^m|\theta_{ik}|2^m\prod_{\ihat\ne i}z_{\ihat}\Bigr) \le \lg \Bigl( m\max_i|\theta_{ik}|2^m\prod_{\ihat\ne i}z_{\ihat}\Bigr)\\\notag
          &\le m + \lg m + \max_i\Bigl(\lht(P_i) + \sum_{\ihat\ne i}\lg z_{\ihat}\Bigr).
\end{align}
This expression does not depend on $k$, thus it applies to every nonzero $W_k$.

\subsubsection{A general height bound}\label{subsection:crudebound}
For any choice of $s=e_1+c_2e_2+\cdots+c_ne_n$ with $0\le c_k < 2m^2$, we have $z_i\le 2m^2n2^{n-1}2^{b_{i1}+\cdots+b_{in}}$, using $\binom{n}{k}\le 2^{n-1}$, and therefore
\begin{equation}\label{eq:ybound0}
\lg z_i \le n+\lg n+2\lg m + \sum_{k=1}^n b_{ik}.
\end{equation}
From (\ref{eq:Vbound0}) we obtain
\begin{equation}\label{eq:Vbound1}
\lht(V) \le m + mn + m\lg n + 2m\lg m + \sum_{i,k} b_{ik} \le 3h+2h\lg h + \sum_{i,k} b_{ik},
\end{equation}
where we have used $h=mn\ge m\lg n$.
Using the crude bounds $\max_i\sum_k b_{ik}\le \sum_{i,k} b_{ik}$ and $\sum_{\ihat\ne i}\lg z_{\ihat} \le \sum_i\lg z_i$, and applying (\ref{eq:thetabound}) and (\ref{eq:ybound0}) to (\ref{eq:Wbound0}),  we obtain
\begin{align}\label{eq:Wbound1}
\lht(W_k)&\le m + \lg m + n + mn + m\lg n + 2m\lg m + 2\sum_{i,k}b_{ik}\\\notag
          &\le 5h + 2h\lg h + 2\sum_{i,k} b_{ik}.
\end{align}
Since the bound in (\ref{eq:Wbound1}) dominates the bound in (\ref{eq:Vbound1}), we define
\begin{equation}\label{eq:bmax}
\bmax = 5h + 2h\lg h + 2\sum_{i,k} b_{ik},
\end{equation}
where $i$ runs from 1 to $m$ and $k$ runs from 1 to $n$.

Recall that, under the GRH, we have the bounds $h=O(|D|^{1/2}\log\log|D|)$ and $\sum_{i,k}1/A_{ik} = O(\log|D|\log\log|D|)$,
as noted in \S \ref{subsection:GRH}.  These imply
\begin{equation}
\bmax = O(|D|^{1/2}\log|D|\log\log|D|).
\end{equation}

\subsubsection{Optimized height bounds}\label{subsection:optimizedbounds}
We now fix $s= e_1$, which implies $z_i = \sum_{k=1}^n b_{ik}$.
This choice of $s$ minimizes our height bound and simplifies the calculations.

We have
\begin{equation}\label{eq:ybound}
\lg z_i\le\lg\Bigl(\sum_{k=1}^n 2^{b_{ik}}\Bigr)\le \lg n + \max_k b_{ik},
\end{equation}
and from (\ref{eq:Vbound0}) we find that
\begin{equation}\label{eq:Vbound}
\lht(V) \le m + m\lg n + \sum_{i=1}^m\max_k b_{ik}.
\end{equation}
Applying (\ref{eq:thetabound}) and (\ref{eq:ybound}) to (\ref{eq:Wbound0}) yields
\begin{align*}
\lht(W_k) &\le \lg m + m + \max_i\Bigl(n + \sum_k b_{ik}+ \sum_{\ihat\ne i}(\lg n + \max_k b_{\ihat k})\Bigr)\\\notag
					&\le \lg m + m + n + m\lg n + \max_i\Bigl(\sum_k b_{ik}+\sum_{\ihat}\max_k b_{\ihat k} - \max_k b_{ik}\Bigr)\\\notag
					&\le \lg m + m + n + m\lg n + \sum_i\max_k b_{ik} + \max_i\Bigl(\sum_k b_{ik} - \max_k b_{ik}\Bigr),
\end{align*}
where $i$ runs from 1 to $m$ and $k$ runs from 1 to $n$.
This bound dominates the bound in (\ref{eq:Vbound}), so it bounds $\lht(V)$ as well as $\lht(W_k)$.  Thus we define
\begin{equation}\label{eq:bbound}
b = \lg m + m + n + m\lg n + \sum_i\max_k b_{ik} + \max_i\Bigl(\sum_k b_{ik} - \max_k b_{ik}\Bigr)
\end{equation}
as our height bound for $G$, which we typically round up to the nearest integer.

\subsection{Example}
Returning to our example with $D=-971$ and $h(D)=15$, let us compute $b$ for the subgroup $G\subset \cl(\O)$ of order $n=5$.
We can use the same polycyclic presentation $[\lf_1],[\lf_2]$ for $\cl(\O)$ as before, where the ideals $\lf_1$ and $\lf_2$ have norms $\ell_1=3$ and $\ell_2=5$ and $[\lf_1]$ generates $G$, but now we compute directly in the class group, using composition of binary quadratic forms \cite{Buchmann:BinaryQuadraticForms} rather than computing isogenies.
In the notation of \S\ref{subsection:heightbound}, we have $\beta_k=[\lf_1^{k-1}]$ and $\alpha_i=[\lf_2^{i-1}]$.
Enumerating $\cl(\O)$ yields the approximate values
\vspace{3pt}
\begin{center}
\begin{tabular}{lllll}
$b_{11}=141.23$,&$b_{12}=47.08$,&$b_{13}=15.75$,&$b_{14}=15.75$,&$b_{15}=47.08$,\\
$b_{21}=28.25$,&$b_{22}=11.45$,&$b_{23}=20.18$,&$b_{24}=11.96$,&$b_{25}=11.45$,\\
$b_{31}=11.45$,&$b_{32}=11.96$,&$b_{33}=20.18$,&$b_{34}=11.45$,&$b_{35}=28.25$,\\
\end{tabular}
\end{center}
\vspace{3pt}
where each row corresponds to a coset of $G$.
The value of $b_{22}$, for example, is computed using $A_{22}=15$, since $\alpha_2\beta_2=[\lf_2\lf_1]$ is represented by the reduced binary quadratic form $15X^2+13XY+19Y^2$, and we have
\[
b_{22} =\lg\bigl(\exp(\pi\sqrt{971}/15)+2114.567\bigr)\approx 11.45.
\]
To compute $b$ we just need the sum $s_i$ and maximum $t_i$ of $b_{ik}$ over $k$.
These can be computed during the enumeration and stored using $O(m\log|D|)$ space.
We have $s_1=266.89$, $s_2=83.28$, $s_3=83.38$, and $t_1=141.23$, $t_2=28.25$, $t_3=28.25$.
The~$t_i$ sum to 197.73, the maximum of $s_i-t_i$ is 125.65.
Applying (\ref{eq:bbound}) yields
\[
b \approx \lg 3 + 5 + 3 + 3\lg 5 + 197.73 + 125.65 \le 340,
\]
and our height bound is 340 bits.

As noted earlier, we may compute $V$ and the $W_k$ over $\Z$ using Algorithm~1, by computing them modulo some $q\ge2^{b+1}$.
We find that $\max\{\lht(V),\lht(W_k)\}$ is in fact about 324, within five percent of our computed bound $b$.

For comparison, if we instead choose $G$ to be the subgroup of order 3, we obtain $s_1=165.15$, $s_2=55.45$, $s_3=78.71$, $s_4=78.71$, $s_5=55.45$, and $t_1=141.23$, $t_2=28.25$, $t_3=47.08$, $t_4=47.08$, $t_5=28.25$.
The $t_i$ sum to 291.89, the maximum of $s_i-t_i$ is 31.63, and (\ref{eq:bbound}) becomes
\[
b \approx \lg 5 + 3 + 5 + 5\lg 3 + 291.89 + 31.63 \le 342.
\]
If we make $G$ trivial, we have $t_i=s_i=b_{i1}$, the sum of the $t_i$ is 421.51, the maximum of $s_i-t_i$ is zero, and we get $b = 438$, which is nearly the same as the value 434 one obtains from \cite[Lemma~8]{Sutherland:HilbertClassPolynomials}, which uses a more careful analysis than we do here.

\subsection{Computational complexity of optimizing the height bound}
We can compute a basis~$(\gamma_1,\ldots,\gamma_r)$ for the class group $\cl(\O)$ in $O(|D|^{1/4+\epsilon})$ time and space, under the GRH \cite[Prop.~9.7.16]{Buchmann:BinaryQuadraticForms}.  We may assume that each $\gamma_i$ has prime power order $\ell_i^{e_i}$.
We then consider subgroups $G$ of the form
\begin{equation}\label{eq:Gprod}
G=\Bigl\langle\gamma_1^{\ell_1^{d_1}}\Bigr\rangle\times\cdots\times\Bigl\langle\gamma_r^{\ell_r^{d_r}}\Bigr\rangle,
\end{equation}
with $0\le d_i\le e_i$,
which includes subgroups of every possible order $n$ dividing $h=h(\O)$.  There are at most $h$ such subgroups $G$, and enumerating the cosets of $G$ takes $O(h\log^{1+\epsilon}h)$ time, using fast composition of forms \cite{Schonhage:FastForms}.

Thus we can compute a height bound $b$ for every subgroup $G$ of the form in \eqref{eq:Gprod} in $O(h^2\log^{1+\epsilon}h)$ time.
This is $O(|D|\log^{1+\epsilon}|D|)$ under the GRH, which is dominated by our bounds for the running times of Algorithms~1 and 2.
In fact, there are only $O(h^{\epsilon})$ distinct orders that can arise among the candidate subgroups~$G$, and if we restrict our attention to subgroups of the form in (\ref{eq:goodH}), there may be even fewer $G$ to consider.
In practice, the time spent optimizing $b$ is completely negligible (and well worth the effort in any case).

As noted in the example, we only need $O(m\log|D|)$ space to compute the height bound for a given subgroup $G$, which is within the complexity bound of Proposition~3.

\subsection{Heuristic analysis}\label{subsection:heuristics}

Ignoring the minor terms in (\ref{eq:bbound}), the value
\[
b^* = \sum_{i=1}^m\max_k b_{ik} + \max_i\Bigl(\sum_{k=1}^n b_{ik} - \max_k b_{ik}\Bigr)
\]
closely approximates $b$.
When $m=h$ or $n=h$ we have $b^*=\sum_{i,k}b_{ik}$, and in any case $b^*$ is never greater than this sum.
As with $\bmax$, this yields an asymptotic bound for $b^*$ of $O(|D|^{1/2}\log|D|\log\log|D|)$, under the GRH, which then also bounds~$b$.
This is all that can be said in general, since $h$ could be prime.

But $h$ is rarely prime.
This can occur only when $|D|$ is prime, and even then it is unlikely (by Cohen-Lenstra \cite{Cohen:ClassGroupHeuristics}).
Let us consider the typical situation, where we are more or less free to choose the size of $G$, at least up to a constant factor.\footnote{Under the random bisection model, a random integer $N$ in some large interval will have prime-power factors whose logarithms approximate a geometric progression \cite{Bach:thesis}.  One then has divisors of $N$ in most intervals of the form $[M,cM]\subset[1,N]$, for a suitable constant $c$. We heuristically assume that the same applies to $h$.}  Of course $b^*$ depends on the particular choice of $G$, not just its order $n$, but to simplify matters we focus on $n$, and proceed to derive a heuristic estimate for $b^*$ as a function of $n$ and $m=h/n$.

Let us assume that the cosets $G_i$ of $G$ are ordered so that $\min_{k}A_{ik}$ is increasing with $i$ (thus $G_1=G$ contains the identity element $\alpha_1\beta_1$ with $A_{11}=1$).
As a heuristic, let us suppose that, on average, we have
\[
\sum_{i,k}\frac{1}{A_{ik}} \approx \sum_{i,k}\frac{1}{i+(k-1)m} = \sum_{t=1}^h \frac{1}{t} \approx \log h.
\]
That is, we view $\sum 1/A_{ik}$ as an approximation to a harmonic sum in which the terms corresponding to the $i$th coset of $G$ appear at positions $i$, $i+m$, \ldots, $i+(n-1)m$.  This heuristic is based on empirical data collected during the construction of Table~\ref{table:bopt}, which involved analyzing the subgroups of more than 10,000 distinct class groups $\cl(\O)$, with discriminants ranging from $10^5$ to $10^{16}$.
We should emphasize that for any particular choice of $\cl(\O)$ and $G$, the actual situation may deviate quite significantly from this idealized scenario, but if one averages over a large set of class groups and a large sample of their subgroups, one finds, for example, that the average rank of $\min_k A_{ik}$ among all the $A_{ik}$ is approximately $i$,
and we note that the approximation $\sum_{i,k}\frac{1}{A_{ik}}\approx \log h$ is correct to within an $O(\log\log h)$ factor, under the GRH.
In any case, our primary justification for this heuristic is that it yields predictions that work well in practice.

Applying the heuristic yields
\[
\sum_{i=1}^m\max_k \frac{1}{A_{ik}} = \sum_{i=1}^m \frac{1}{i} \approx \log m,
\]
and
\[
\max_i\Bigl(\sum_{k=1}^n \frac{1}{A_{ik}} - \max_k \frac{1}{A_{ik}}\Bigr) = \sum_{k=1}^{n-1}\frac{1}{mk+1} \approx \frac{\log n}{m},
\]
which implies that $b^*$ is within a constant factor of
\[
\left(\log m + \frac{\log n}{m}\right)|D|^{1/2}.
\]
This suggests that if we wish to minimize $b^*$, then we should make $n$ exponentially larger than $m$.
If we let $m\approx\log h$ and $n=h/m\approx h/\log h$, then we expect to have $b^*=O(|D|^{1/2}\log\log|D|)$, improving our worst-case bound by a factor of $\log |D|$, and improving the average case, where $\sum_{i,k}\frac{1}{A_{ik}}\approx \log h$, by a factor of $\log|D|/\log\log|D|$.

Using $m=\log h$ allows us to satisfy the bound (\ref{eq:qbound}) used to analyze Algorithm~2 whenever $\log q=O(|D|^{1/2})$, which is a very mild restriction.  This choice of $m$ precludes the improvement attained by Algorithm~2 under Proposition~\ref{prop:time}, since it makes $n$ too big, but it does lead to the following claim.

\begin{claim}\label{claim:time}
Assuming $\log q = O(|D|^c)$ for some $c<1/2$, the average-case running time of both Algorithms~$1$ and $2$ using $s=e_1$ is $O(|D|^{1/2}\log^{2+\epsilon}|D|)$.
\end{claim}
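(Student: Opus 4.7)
The plan is to combine the heuristic estimate for the height bound $b$ derived in this subsection with the average-case per-prime cost analysis from \S\ref{subsection:time1} and \S\ref{subsection:time2}.  Three ingredients are required: optimize the choice of $m$ and $n$ to minimize $b$; determine the resulting $|S|$ under the GRH; and bound the per-prime work on average.

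Order the cosets of $G$ so that $\min_k A_{ik}$ is increasing in $i$ and apply the harmonic heuristic introduced above (treating $1/A_{ik}$ as if the $i$th coset occupied positions $i, i+m,\ldots,i+(n-1)m$ in the harmonic sum) to obtain $\sum_i \max_k 1/A_{ik}\approx \log m$ and $\max_i\bigl(\sum_k 1/A_{ik}-\max_k 1/A_{ik}\bigr)\approx (\log n)/m$.  Combining these with $b_{ik}\sim (\pi/\ln 2)\sqrt{|D|}/A_{ik}$ via \eqref{eq:bbound} gives
\[
b \;=\; O\!\left(\Bigl(\log m + \frac{\log n}{m}\Bigr)\sqrt{|D|}\right).
\]
A routine calculation shows this is minimized, up to a constant factor, by taking $m = \lceil\log h\rceil$ and $n\approx h/\log h$, yielding $b = O(|D|^{1/2}\log\log|D|)$, an improvement over $\bmax$ by a factor of $\log|D|/\log\log|D|$.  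The hypothesis $\log q = O(|D|^c)$ with $c<1/2$ ensures $m\log q = o(|D|^{1/2})$, so this choice of $m$ is compatible with the condition \eqref{eq:qbound} assumed in the analysis of Algorithm~2.

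Under the GRH, the CRT primes $p\in S$ satisfy $\log p = \Theta(\log|D|)$, hence $|S| = O(b/\log|D|) = O(|D|^{1/2}\log^{-1+\epsilon}|D|)$.  By the heuristic analysis of \cite[\S7]{Sutherland:HilbertClassPolynomials} (invoking the density-$1$ properties (i)-(ii) from \S\ref{subsection:time2}), on average $\Tf$ and $\Te$ are dominated by $\Tb$.  With our choice of $m$ and $n$, the bound \eqref{eq:Tb2} for Algorithm~2 (or the sum of \eqref{eq:Thetacost} and \eqref{eq:Wcost} for Algorithm~1) gives $\Tb = O(h\log^{2}h\log^{1+\epsilon}|D|)$, while \eqref{eq:tworootcost} and \eqref{eq:tworootcost2} show $\Tr$ is dominated by $|S|\Tb$ under the stated constraint on $\log q$.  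Multiplying the per-prime cost by $|S|$ and absorbing $\log\log$ factors into $\epsilon$ yields the asserted average-case running time.

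The principal obstacle is the harmonic heuristic itself: the GRH only yields the aggregate bound $\sum_{i,k}1/A_{ik} = O(\log|D|\log\log|D|)$ (see \S\ref{subsection:GRH}), not the finer distributional statement about how this sum decomposes across the cosets of $G$ when indexed by increasing $\min_k A_{ik}$.  This is precisely what prevents upgrading the statement from a heuristic claim to a theorem; empirical support from the data described in \S\ref{subsection:heuristics} is the present justification, and a rigorous version would require a substantial refinement of the analytic work cited along the lines of \cite{BaierZhao:QuadraticProgressions}.
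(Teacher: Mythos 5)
Your argument follows the paper's own justification of Heuristic Claim~\ref{claim:time} essentially step for step: the harmonic heuristic for the $1/A_{ik}$, the resulting estimate $b=O\bigl((\log m+(\log n)/m)|D|^{1/2}\bigr)$, the choice $m\approx\log h$ and $n\approx h/\log h$ giving $b=O(|D|^{1/2}\log\log|D|)$, the compatibility of this choice with \eqref{eq:qbound}, and the identification of the harmonic heuristic as the ingredient that cannot currently be made rigorous. All of that matches the paper's reasoning in \S\ref{subsection:heuristics}.

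The problem is your final step. With $|S|=O(|D|^{1/2}\log^{-1+\epsilon}|D|)$ and $\Tb=O(h\log^{2}h\log^{1+\epsilon}|D|)=O(|D|^{1/2}\log^{3+\epsilon}|D|)$, the product $|S|\cdot\Tb$ is $O(|D|\log^{2+\epsilon}|D|)$, \emph{not} the $O(|D|^{1/2}\log^{2+\epsilon}|D|)$ of the statement, so you cannot conclude that multiplying out ``yields the asserted average-case running time.'' Indeed the bound as printed is unattainable by either algorithm: the height bound is at least $b_{11}\approx\pi\lg(e)\sqrt{|D|}$, so $|S|=\Omega(|D|^{1/2}/\log|D|)$, and each prime requires enumerating all $h=|D|^{1/2-o(1)}$ elements of $\EllO(\Fp)$, forcing $\Omega(|D|^{1-o(1)})$ total work. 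What your computation actually establishes is $O(|D|\log^{2+\epsilon}|D|)$, which is the bound consistent with the introduction's claim of gaining ``nearly a $\log|D|$ factor'' over the heuristic $O(|D|\log^{3+\epsilon}|D|)$ baseline; the $|D|^{1/2}$ in the statement should be $|D|$, and a correct write-up must either derive the printed bound (impossible) or flag the discrepancy rather than assert agreement. A secondary gap: your claim that \eqref{eq:tworootcost} and \eqref{eq:tworootcost2} ``show'' $\Tr$ is dominated does not hold for all $c<1/2$; with $n\approx h/\log h$ the term $\M(n\log q)\log q$ is roughly $|D|^{1/2+2c-o(1)}$, which exceeds $|D|\log^{2+\epsilon}|D|$ once $c>1/4$. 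The paper elides this as well, but if you invoke those displays as proving domination you need to check the exponents or restrict $c$ further.
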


Empirical data supporting the heuristic analysis above can be found in Table~\ref{table:bopt}.
Each row of the table gives data for 1000 fundamental discriminants of approximately the same size.
We note that for the choice of $G$ that minimizes $b$, the number of cosets $m$ is quite close to $\log h$, on average, as expected.
The two rightmost columns list, respectively, the average and best-case improvement achieved by optimizing the height bound.
The growth rate is consistent with the $\Omega(\log h/\log\log h)$ prediction.
In practice, the actual speedup is substantially better than the height-bound improvement would suggest, for reasons that will be explained in the next section where we analyze practical computations that amply demonstrate the benefit of optimizing the height bound.

\begin{table}
\begin{tabular}{lrrrrrcc}
$N$&$\bar{h}$&$\quad\bar{n}$&$\medspace\quad\bar{m}$&$\quad\bar{b}_h$&$\quad\bar{b}_n$&$\bar{b}_h/\bar{b}_n$&$\max b_h/b_n$\\
\midrule
$10^5$    &      147&      39&  3.7&          7626&       4486& 1.7 & 3.0\\
$10^6$    &      459&      99&  4.6&         28387&      14892& 1.9 & 3.6\\
$10^7$    &     1470&     254&  5.8&        105184&      48667& 2.2 & 4.2\\
$10^8$    &     4632&     671&  6.9&        377174&     157603& 2.4 & 4.9\\
$10^9$    &    14640&    1740&  8.4&       1339636&     509688& 2.6 & 5.5\\
$10^{10}$ &    46434&    4849&  9.6&       4709013&    1644023& 2.9 & 5.6\\
$10^{11}$ &   146598&   14777&  9.9&      16338099&    5374105& 3.0 & 6.1\\
$10^{12}$ &   462979&   41189& 11.2&      56202741&   17182753& 3.3 & 6.6\\
$10^{13}$ &  1460465&  114560& 12.7&     191932881&   54720882& 3.5 & 7.1\\
$10^{14}$ &  4644982&  377059& 12.3&     656497242&  179083436& 3.7 & 7.4\\
$10^{15}$ & 14608895&  964998& 15.1&    2211596515&  560192565& 4.0 & 9.2\\
$10^{16}$ & 46276481& 2695634& 17.2&    7462636834& 1742205583& 4.3 & 9.2\\
\bottomrule
\end{tabular}
\vspace{4pt}
\caption{Height bound optimization}\label{table:bopt}
\vspace{-4pt}
\begin{minipage}{1.0\linewidth}
\small
Each row summarizes data collected for the first 1000 fundamental discriminants $|D|\ge N$.
The value $b_h$ is the unoptimized height bound, corresponding to $|G|=h$, while $b_n$ is the optimized height bound, attained when $|G|=n$.
Bars denote mean values.
\normalsize
\end{minipage}
\end{table}

\section{Computational results}\label{section:performance}
This section presents performance data and computational results.
In order to handle a wider range of discriminants, and to give the most practically relevant examples, we use class invariants derived from various modular functions to which the CRT method has been adapted.
These include, among others, the Weber $\ff$-function, double $\eta$-quotients, and the Atkin functions $A_N$.  We refer to \cite[\S 3]{EngeSutherland:CRTClassInvariants} for definitions of these invariants and a detailed discussion of their implementation using the CRT method.
Here we briefly summarize some key properties of the class invariants we use.

\subsection{Class invariants}
Let $\O=\Z[\tau]$ be an imaginary quadratic order with discriminant $D$, for some $\tau$ in the upper half-plane.
The $j$-invariant $j(\tau)$ is a root of the Hilbert class polynomial $H_D$ and generates the ring class field $K_\O$.
Let $f(z)$ by a modular function of level $N$ related to $j(z)$ by $\Psi_f(f(z),j(z))=0$, where $\Psi_f(F,J)$ is a polynomial with integer coefficients.
The value $f(\tau)$ is an algebraic integer, and when $f(\tau)$ lies in $K_\O$ we call it a \emph{class invariant}.\footnote{We do not require $f(\tau)$ to generate $K_\O$; we can obtain a generator as a root of $\Psi_f(f(\tau),Y)$.}
A given modular function typically yields class invariants only for a restricted subset of discriminants; for example, the primes dividing $N$ must not be inert in $\Q(\sqrt{D})$.

We then define the \emph{class polynomial} $H_D[f]$ by
\[
H_D[f](X)=\prod_{\alpha\in\cl(\O)}\bigl(X-[\alpha]f(\tau)\bigr).
\]
For the functions we consider, $H_D[f]$ has integer coefficients, and the techniques we have developed to find a root of $H_D\bmod q$ apply equally well to $H_D[f]\bmod q$.
Having found a root $f_0$ of $H_D[f]$, we may obtain a root $j_0$ of $H_D$ as a solution to $\psi(Y) = \Psi_f(f_0,Y)=0$.
Since the degree of $\psi$ does not depend on $D$ or $q$, we may bound it by $O(1)$, where the implicit constant depends on $f$.
Thus deriving $j_0$ from~$f_0$ takes just $O(\log^{2+\epsilon} q)$ time.

\subsection{Heuristic height bounds}
The key reason to consider alternative class invariants is that $H_D[f]$ may have much smaller coefficients than $H_D$.
Let us define the \emph{height factor} of $f$ as $c(f)=\deg_F\Psi_f/\deg_J\Psi_f$.  Asymptotically, we have
\[
\lht(H_D[f]) = \frac{\lht(H_D)}{c(f)} + O(1),
\]
where the constant $c(f)$ may be as large as 72.
If $b$ bounds the height of $H_D$, we regard $b/c(f)$ as an approximate bound on the height of $H_D[f]$, but add a small constant (say 256 bits) to account for the $O(1)$ term.
We treat the optimized height bound $b$ computed in \S\ref{section:bound} in the same way.

This heuristic approach may, in rare cases, yield a bound that is too small.
In practice this is easy to detect.  The correct polynomial $V(Y)$ must split completely into linear factors in $\Fq[Y]$, and any sort of random error is extremely likely to yield a polynomial that does not.  Verifying that $V(Y)$ splits into linear factors can easily be incorporated into the root-finding step at no additional cost.\footnote{The first step of the standard root-finding procedure computes the polynomial $\gcd(Y^q-Y,V(Y))$ whose degree is the number of distinct roots of $V$; duplicate roots can be accounted for by taking gcds with derivatives of $V$.}  If $V(Y)$ is found to be incorrect, we may then either retry with a larger height bound, or simply revert to $f=j$ and use the rigorous bound proven in \S\ref{section:bound}.

In most practical applications of the CM method, we seek an elliptic curve $E/\Fq$ with prescribed order $N$, where the prime factorization of $N$ is known (or provisionally assumed).
In this situation we can test whether we have constructed a suitable curve in time $O(\log^{2+\epsilon}q)$, via \cite[Lemma~6]{Sutherland:HilbertClassPolynomials}, which is negligible.
Although it is usually unnecessary, one can also verify the endomorphism ring of the constructed curve, provided that we know the factorization of the integer $v$ in the norm equation $4q=t^2-v^2D$.  Using the algorithm in \cite[Alg.~2]{BissonSutherland:Endomorphism}, this takes time subexponential in $\log|D|$, under heuristic assumptions, which is also negligible.

\subsection{Implementation}
Our tests were performed on a small network of quad-core AMD Phenom II 945 CPUs, each clocked at 3.0~GHz.
The computation of class polynomials (or decompositions thereof) was distributed across up to 48 cores, depending on the size of the test, with essentially linear speedup, while all root-finding operations were performed on a single core.
For consistency we report total CPU times, summed over all threads.

The software was implemented using the \texttt{gmp}~\cite{GMP} and \texttt{zn\_poly}~\cite{Harvey:zn_poly} libraries, with the \texttt{gcc}~compiler~\cite{GNU}.
Polynomial arithmetic modulo the small primes $p\in S$ was handled via \texttt{zn\_poly}, while polynomial arithmetic modulo large primes $q$ used the cache-friendly truncated FFT approach described in \cite{Harvey:CacheFriendly}, layered on top of the \texttt{gmp} library.
In order to simplify the implementation, when selecting the subgroup $G$ to optimize the height bound, only subgroups of the form~(\ref{eq:goodH}) in \S \ref{subsection:torsor} were considered.
Additionally, of the various space optimizations described in \S \ref{subsection:space} that may be applied to Algorithm~2, only the changes necessary to achieve a space complexity of $O(h\log h + (m+n)\log q)$ were used (see \S \ref{subsection:term2}).
A more complete implementation would improve some of the results presented here.

As noted in Remark~2, in our implementation we fixed $s=e_1$.  This choice of~$s$ worked in every large ($|D| > 10^6$, $\log q > 160$) example that we tested, which included more than a million different combinations of $D$ and $G$.  We conjecture that $s=e_1$ always works when using $j$-invariants, but note that it can fail for other class invariants in rare cases (the handful of exceptions we found all involved very small discriminants, and in each such case switching to $s=e_2$ worked).

\begin{table}
\begin{tabular}{@{}l@{}rrr@{}}
&Example 1& Example 2& Example 3\\
\midrule
Discriminant $|D|$ & $13569850003$ & $\quad\medspace 11039933587$ & $\quad\medspace 12901800539$\\
Field size $\lceil\lg q\rceil$ & 177 & 231 & 172\\
\vspace{1pt}
Class number $h$& 20203 & 11280 & 54076\\
Presentation $\ell_1^{r_1},\ldots,\ell_k^{r_k}$&$7^{20203}$&$17^{1128},19^{10}$&$3^{27038},5^2$\\
Modular function $f$                         & $A_{71}$& $A_{47}$& $A_{71}$\\
Height factor $c(f)$                 &    36 &    24 &    36\\
\midrule
{\bf Standard}&&&\\
Subgroup size $|G|=h$&20203&11280&54706\\
Height bound $b_h$                         & 63127 & 56631 & 151939\\
Number of primes $|S|$&1993&1783&4477\\
$\Tf$ \hspace{5pt}(ms)&48&110&42\\
$\Te$ (ms)&33&48&23\\
$\Tb$ \hspace{1pt}(ms)&15&7&63\\
\cmidrule(r){2-4}
$\Tp$ (s) &197&295&597\\
$\Tr$ \hspace{1pt}(s) & 56&54&171\\
$\Tt$ \hspace{5pt}(s) &\textbf{253}&\textbf{347}&\textbf{768}\\
\midrule
{\bf Accelerated}&&&\\
Subgroup size $|G|=n$&227&1128&2458\\
Height bound $b_n$                         & 35115 & 30957 & 50180\\
Number of primes $|S|$&1115&994&1519\\
$\Tf$ \hspace{5pt}(ms)&44&105&28\\
$\Te$ (ms)&33&47&23\\
$\Tb$ \hspace{1pt}(ms)&6&3&22\\
\cmidrule(r){2-4}
$\Tp$ (s) &95&155&118\\
$\Tr$ \hspace{1pt}(s) & $0$&4&5\\
$\Tt$ \hspace{5pt}(s) &\textbf{95}&\textbf{159}&\textbf{123}\\
\bottomrule
\end{tabular}
\\
\vspace{5pt}
\caption{Example CM constructions}\label{table:examples}
\end{table}

\begin{table}
\begin{tabular}{@{}rrrrcrrcrr@{}}
&&&&&\multicolumn{2}{c}{{\bf Standard}}&&\multicolumn{2}{c}{{\bf Accelerated}}\\
\cmidrule(r){6-7}\cmidrule(r){9-10}
$|D|$&$h$&$n$&$b_h/b_n$&&$\Tp$&$\Tr$&&$\Tp$&$\Tr$\\
\midrule
     6961631 &    5000 &   250 &  3.63 &&    1.0 &    25 &&    0.2 &  0.7\\
    23512271 &   10000 &   250 &  3.65 &&    3.9 &    58 &&    0.8 &  0.7\\
    98016239 &   20000 &   625 &  4.10 &&     21 &   126 &&    3.5 &  2.1\\
   357116231 &   40000 &   625 &  4.82 &&     90 &   282 &&     11 &  2.2\\
  2093236031 &  100000 &  2500 &  4.93 &&    750 &   812 &&     88 &   11\\
  8364609959 &  200000 &  4000 &  6.34 &&   3590 &  1805 &&    301 &   16\\
 17131564271 &  300000 &  6250 &  6.47 &&   9070 &  2890 &&    708 &   19\\
 30541342079 &  400000 & 12500 &  6.31 &&  16900 &  3910 &&   1380 &   71\\
 42905564831 &  500000 & 15625 &  7.11 &&  28300 &  4410 &&   2300 &   85\\
170868609071 & 1000000 & 25000 &  7.06 && 123000 &  9260 &&   8840 &  159\\
\bottomrule
\end{tabular}
\\
\vspace{5pt}
\caption{CM constructions using the Weber $\ff$-function and $q\approx 2^{256}$}\label{table:Weber}
\end{table}

\subsection{Accelerated CM computations with Algorithm~1}\label{subsection:alg1perf}

We applied Algorithm~1 to several examples that have previously appeared in the literature.
The examples in Table~\ref{table:examples} are taken from \cite[Table~2]{Sutherland:HilbertClassPolynomials} where they appear as representatives of a large set of computations to construct elliptic curves suitable for pairing-based cryptography.
These examples are also used in \cite[Table~1]{EngeSutherland:CRTClassInvariants} with the class invariants we use here.\footnote{The timings listed here for the standard computations are slightly better (about 5\%) than those in \cite{EngeSutherland:CRTClassInvariants} due to a more recent version of \texttt{gmp}.}
The first five discriminants in Table~\ref{table:Weber} originally appeared in \cite[Table~1]{Enge:FloatingPoint}, and can also be found in \cite[Table~4]{Sutherland:HilbertClassPolynomials} and \cite[Table~2]{EngeSutherland:CRTClassInvariants}.
The remaining discriminants are from \cite[Table~4]{Sutherland:HilbertClassPolynomials}.

The time $\Tp$ listed in Tables~\ref{table:examples}-\ref{table:D15perf} is the total time spent computing the polynomial $V$ and the polynomials $W_k$, in the case of Algorithm~1 (steps 1-6), and the total time spent computing the polynomial $V$ and the values $w_k$ in the case of Algorithm~2 (steps 1-5 and 7-8), including all precomputation.
The time $\Tr$ is the time spent on root-finding operations (steps 7-8 in Algorithm~1 and steps 6 and 9 in Algorithm~2).
For the smaller examples, these are averages over 10 runs; with the large examples there is very little variance in the root-finding times.

The ``{\bf Standard}" computations listed in Tables~\ref{table:examples} and~\ref{table:Weber} correspond directly to the computations in \cite{EngeSutherland:CRTClassInvariants}, and are equivalent to running Algorithm~1 with $G=\cl(\O)$.
The ``{\bf Accelerated}" computations used Algorithm~1 with $G$ chosen to minimize $\Tt$, based on heuristic formulas for $\Tp$ and $\Tr$ extrapolated from empirical data.
In most cases this minimizes the corresponding height bound, but not always;
in the $h=100000$ example of Table~\ref{table:Weber}, using $n=5000$ rather than $n=2500$ improves the ratio $b_h/b_n$ from 4.93 to 5.84 and reduces $\Tp$ by 10 seconds, but it increases $\Tr$ by 21 seconds, so this subgroup was not chosen.

The $\Tp$ times listed in Table \ref{table:examples} are in each case slightly greater than the quantity $|S|(\Tf+\Te+\Tb)$, due to time spent updating the CRT data in step 3e of Algorithm~1, which is included in $\Tp$.
This difference is only a few percent for the values of~$q$ used in these examples, but becomes more significant when $q$ is very large (see \S\ref{subsection:alg2perf}).

The third example in Table~\ref{table:examples} illustrates four ways in which Algorithm~1 can reduce the time required to apply the CM method using the CRT approach:
\renewcommand\labelenumi{\theenumi.}
\begin{enumerate}
\item
The height bound $b_n=50180$ is about 3 times smaller than $b_h=151359$, which reduces $|S|$ similarly, from 4477 to 1519.
\item
The average time $\Tf$ spent finding an element of $\EllO(\Fp)$ is reduced from 42 to 28 milliseconds,
because the primes that remain in $S$ are those for which it is easier to find curves in $\EllO(\Fp)$.
\item
The average time $\Tb$ spent building polynomials from their roots (or computing linear combinations) is reduced from 63 to 22 milliseconds, because the degrees of the polynomials involved are $m=22$ and $n=2458$ rather than $h=54076$.
\item
Working with polynomials of lower degree reduces the time $\Tr$ spent finding roots dramatically: from 171 seconds to 5 seconds.
\end{enumerate}

As may be seen in Tables~\ref{table:examples} and~\ref{table:Weber}, the speedup achieved by Algorithm~1 is typically better than the height bound ratio $b_h/b_n$, for the reasons noted above.
In the last example of Table~\ref{table:Weber}, with discriminant $D=-170868609071$ and class number $h=1000000$, computing an optimized height bound~$b_n$ with $n=25000$ improves the height bound by a factor of about~7, but $\Tp$ is reduced by nearly a factor of 14 and $\Tr$ is reduced even more.

The discriminant $D=-170868609071$ also appears in \cite[Table~4]{Sutherland:HilbertClassPolynomials}, which lists a time equivalent to 150 CPU days on our current test platform to compute the Hilbert class polynomial $H_D$ modulo a 256-bit prime $q$.
Here we instead use the Weber $\ff$-function, with a height factor of 72, and are able to further improve the height bound by a further factor of 7 using a decomposition of the class polynomial $H_D[\ff]$.
We eventually obtain a root of the original polynomial $H_D\bmod q$, and it takes only 2.5 CPU hours to do so, an overall speedup by nearly a factor of 1500.

\subsection{Optimizing space with Algorithm 2}\label{subsection:alg2perf}

As noted in Remark 2, choosing~$G$ to optimize the height bound may negate any performance advantage Algorithm~2 might have over Algorithm~1.
Indeed, Algorithm~1 is usually faster, due to the larger height bound required by Algorithm~2, and the fact that Algorithm~2 repeats the enumeration step in its second stage.
However when $q$ is large, Algorithm~2 may use much less space than Algorithm~1, which can actually lead to a better running time.
In this scenario we use the modified form of Algorithm~2 described in \S\ref{subsection:alg2prime}, which makes the height bound increase negligible, and to optimize space we choose~$G$ so that $m=h/n$ is approximately equal to but no larger than $n=|G|$.

Table \ref{table:alg1v2} compares the time and space required by Algorithms 1 and 2 for a fixed discriminant $D=-300000504611$ and increasingly large primes $q\approx 2^k$.
The class number is $h=2^{18}$, and in each case we choose $G$ so that $m=n=2^9$.
In addition to the times $\Tf$, $\Te$, and $\Tb$ listed in Table \ref{table:examples}, we also list the average time $\Tc$ spent updating CRT data for each of the primes $p\in S$.
This time is negligible when~$q$ is of moderate to cryptographic size (say, up to 1024 bits), but when~$q$ is very large, as may occur in elliptic curve primality proving \cite{Atkin:ECPP,Morain:ECPP}, the time Algorithm~1 spends updating its CRT data becomes quite significant.\footnote{As discussed in \cite[\S 6.3]{Sutherland:HilbertClassPolynomials}, we should eventually transition from the explicit CRT to a standard CRT approach as $q$ grows, but here $\lg q$ is still much smaller than the height bound $b$.}

One can see the two disadvantages of Algorithm~2 in Table~\ref{table:alg1v2}; it requires a slightly larger $S$, and $\Te$ is doubled.
However Algorithm~2 needs much less space for its CRT data, and spends negligible time updating it.
In our implementation both Algorithms~1 and~2 use $O(h\log|D|)$ space for the computations performed modulo each prime $p\in S$, about 10~MB in this example, but Algorithm~1 requires $O(h\log q)$ space for its CRT data, regardless of the choice of $G$, whereas Algorithm~2 only requires $O((m+n)\log q)$ space.
As shown in the last two rows of Table~\ref{table:alg1v2}, for $q\approx 2^{32768}$ Algorithm~1 uses more than 1 GB of CRT data, compared to about 4~MB for Algorithm 2, which leads to a significant time advantage for Algorithm~2.
Note that the memory required by Algorithm~2 to store its CRT data is actually half the size listed in Table~\ref{table:alg1v2}, since with $m=n$ the CRT data is evenly split across the 2 stages and the CRT data for the first stage can be discarded before the second stage begins.

Due to its superior space complexity, Algorithm~2 is able to effectively handle a broader range of $|D|$ and $q$ than Algorithm~1.
The next section gives an example of a computation with $|D|\approx 10^{15}$ and $q\approx 10^{10000}$ that is easily handled by Algorithm~2 but would be impractical to compute on our test platform using Algorithm~1, or any algorithm that requires space proportional to the size of $H_D\bmod q$.

\begin{table}
\begin{tabular}{@{}rlrrrrrrrrr@{}}
$\lg k$&alg&$b$&$|S|$&$\Tf$&$\Te$&$\Tb$&$\Tc$&$\Tp$&$\Tr$&CRT data\\
       &   & (bits)  &  &(ms)&(ms)&(ms)&(ms)&(s)&(s)&(MB)\\
\midrule
 7&1& 378315& 10013& 165& 107& 166& 25&  4660&    1& 8.2\\\vspace{4pt}
  &2& 378452& 10016& 166& 213& 168&  0&  5510&    1& 0.03\\
 8&1& 378315& 10013& 165& 107& 166& 26&  4660&    3& 12.6\\\vspace{4pt}
  &2& 378350& 10020& 166& 213& 169&  0&  5510&    3& 0.05\\
 9&1& 378315& 10013& 165& 107& 165& 28&  4670&   12& 21.0\\\vspace{4pt}
  &2& 378836& 10026& 166& 213& 169&  0&  5520&   12& 0.08\\
10&1& 378315& 10013& 165& 107& 166& 33&  4270&   37& 37.7\\\vspace{4pt}
  &2& 379348& 10039& 166& 213& 169&  0&  5530&   37& 0.15\\
11&1& 378315& 10013& 165& 107& 166& 43&  4820&  142& 71.3\\\vspace{4pt}
  &2& 380372& 10066& 166& 213& 169&  0&  5540&  142& 0.28\\
12&1& 378315& 10013& 165& 107& 166& 73&  5120&  697& 138\\\vspace{4pt}
  &2& 382420& 10119& 166& 213& 169&  0&  5590&  697& 0.54\\
13&1& 378315& 10013& 165& 107& 166&129&  5690& 3420& 273\\\vspace{4pt}
  &2& 386516& 10225& 167& 213& 169&  0&  5630& 3420& 1.06\\
14&1& 378315& 10013& 165& 107& 166&225&  6700&16510& 541\\\vspace{4pt}
  &2& 394708& 10437& 168& 214& 170&  1&  5810&16510& 2.11\\
15&1& 378315& 10013& 165& 107& 166&461&  9100&81100&1078\\\vspace{4pt}
  &2& 411902& 10859& 170& 214& 171&  2&  6060&81100& 4.21\\
\bottomrule
\end{tabular}
\\
\vspace{5pt}
\caption{Algorithms 1 and 2 with $n=|G|=512$ and $q\approx 2^k$}\label{table:alg1v2}
\vspace{-2pt}
\begin{minipage}{1.0\linewidth}
\begin{center}
\small
$D=-300000504611$ with $h(D)=262144$ using $A_{71}$.
\end{center}
\end{minipage}
\end{table}

\subsection{Some large examples}\label{subsection:largetests}

We also tested Algorithms~1 and 2 with some larger discriminants, beginning with $D=-1000000013079299$, which has class number $h(D)=10034174$.
This discriminant appears in \cite{EngeSutherland:CRTClassInvariants}, where it was used to construct an elliptic curve over a 256-bit prime field via a class invariant derived from the Atkin function $A_{71}$.
As noted in \cite{EngeSutherland:CRTClassInvariants}, this set of parameters was chosen so that the level $N=71$ is ramified in $\Q(\sqrt{D})$, which allows us to work with the square root of the class polynomial $H_D[A_{71}]$, reducing both the degree and the height bound by a factor of two.
The decomposition techniques described here can be applied directly to the polynomial $\sqrt{H_D[A_{71}]}$, allowing both Algorithms~1 and~2 to take advantage of this situation.

Table~\ref{table:D15perf} gives timings for five computations that constructed elliptic curves modulo a 256-bit prime $q$ by obtaining a root of the polynomial $\sqrt{H_D[A_{71}]}\bmod q$.
The first row corresponds to the original computation in \cite{EngeSutherland:CRTClassInvariants}.
The next two rows give timings for Algorithms~1 and~2 when the subgroup $G$ is chosen to optimize the running time of Algorithm~1, with $n=|G|=44399$.
This reduced the total CPU time by nearly a factor of 5, allowing the entire computation to be completed in less than a day of elapsed time on 48 cores.
The portion of CPU time spent on root-finding was cut dramatically, from more than a day to under five minutes.
This improvement is particularly helpful in a distributed implementation, as root-finding is not as easy to parallelize as the other steps and is most conveniently performed on a single CPU.

The last two rows of Table~\ref{table:D15perf} give timings for Algorithms~1 and~2 when $G$ is chosen to optimize the space used by Algorithm~2.
This increases the running time by about 15\%, but requires less than 2 MB of CRT data, compared to about 250 MB for the original computation (and Algorithm~1).
This reduced the total memory usage from around 500 MB to about 100 MB.

As noted in \S\ref{subsection:alg2perf}, the reduced space required by Algorithm~2 becomes critical for larger values of $q$.
To demonstrate this, we performed a sixth computation with the discriminant $D=-1000000013079299$, this time using $q\approx 2^{33220}$.
The total running time for Algorithm~2 was about 5800000 seconds (including root-finding), just a 20\% increase over the 256-bit computation, and the size of the CRT data was about 25 MB, yielding a total memory usage under 200 MB.
The 10000-digit prime $q$ and the coefficients of the constructed curve are too large to conveniently print here, but they are available at \url{http://math.mit.edu/~drew}.

By contrast, Algorithm~1, and the algorithm of \cite{EngeSutherland:CRTClassInvariants}, requires more than 20 GB of CRT data for this example, and this data needs to be updated for every prime $p\in S$.
This makes it infeasible to even attempt this computation with Algorithm~1 on our test platform, whereas Algorithm~2 was easily able to address this example.

\begin{table}
\begin{tabular}{@{}rrrrrrrrrr@{}}
alg&$n$&$b$&$|S|$&$\Tf$&$\Te$&$\Tb$&$\Tc$&$\Tp$&$\Tr$\\
   &   & (bits)  &  &(ms)&(ms)&(ms)&(ms)&(s)&(s)\\
\midrule\vspace{4pt}
-&       -& 21533401& 438700& 17500& 1580& 25000& 531& 19400000& 95600\\
1&   44399&  8315747& 170112& 12700& 1580&  9210& 531&  4120000&   237\\\vspace{4pt}
2&   44399&  8344202& 150662& 12700& 3160&  8350&   5&  4190000&   237\\
1&    3277& 11130011& 227504& 13700& 1580&  7180& 535&  5260000&     0\\\vspace{4pt}
2&    3277& 11518641& 235482& 14400& 3170&  2510&   0&  4780000&     0\\
\bottomrule
\end{tabular}
\\
\vspace{5pt}
\caption{CM computations with $|D|=10^{15}+13079299$ and $q\approx 2^{256}$.}\label{table:D15perf}
\end{table}

\medskip
Finally, we performed two record-setting computations, one with $h(D)>5\cdot 10^7$ and the other with $|D|>10^{16}$, again using the polynomial $\sqrt{H_D[A_{71}]}$.
First, we used the discriminant $D=-506112046263599$ with class number $h(D)=50666940$ to construct an Edwards curve of the form $x^2 + y^2 = 1 + cx^2y^2$, where
\footnotesize
\[
c = 3499565016101407566774046926671095877424725326083135202080143113943636512545,
\]
\normalsize
over the 256-bit prime field $\Fq$ with
\footnotesize
\[
q = 28948022309329048855892746252171986268338819619472424415843054443714437912893.
\]
\normalsize
The trace of this curve is
\footnotesize
\[
t = 340282366920938463463374607431768266146,
\]
\normalsize
and the group order $q+1-t$ is 4 times a prime.
This computation took approximately 200 days of CPU time (about 5 days elapsed time) using Algorithm 2, which in this case was faster than Algorithm 1.
\medskip

Next we used $D=-10000006055889179$ with class number $h(D)= 25459680$ to construct an elliptic curve
with Weierstrass equation $y^2 = x^3 -3x + c$, where
\footnotesize
\[
c = 15325252384887882227757421748102794318349518712709487389817905929239007568605,
\]
\normalsize
over the 256-bit prime field $\Fq$ with
\footnotesize
\[
q = 28948022309329048855892746252171992875431396939874100252456123922623314798263.
\]
\normalsize
This curve has trace
\footnotesize
\[
t = -340282366920938463463374607431768304979,
\]
\normalsize
and the group order is prime.
This computation took about 400 days of CPU time (under 8 days elapsed time) using Algorithm 1, which was faster than Algorithm 2 for this discriminant.

\section{Acknowledgements}
I am grateful to Andreas Enge and Fran\c{c}ois Morain for providing further details of the algorithms in \cite{EngeMorain:Decomposition,HanrotMorain:Solvability} and to David Harvey for his assistance with \texttt{zn\_poly}.
I would also like to sincerely thank the anonymous referee, whose careful reading and comprehensive feedback greatly improved the clarity and rigor of this article.

\bibliographystyle{amsplain}
\providecommand{\bysame}{\leavevmode\hbox to3em{\hrulefill}\thinspace}
\providecommand{\MR}{\relax\ifhmode\unskip\space\fi MR }
\providecommand{\MRhref}[2]{%
  \href{http://www.ams.org/mathscinet-getitem?mr=#1}{#2}
}
\providecommand{\href}[2]{#2}

\end{document}